\g@addto@macro\normalsize{%
  \setlength\abovedisplayskip{8pt plus 3pt minus 3pt}
  \setlength\belowdisplayskip{8pt plus 3pt minus 3pt}
  \setlength\abovedisplayshortskip{6pt plus 3pt minus 2pt}
  \setlength\belowdisplayshortskip{6pt plus 3pt minus 2pt}
}
\date{\today}
\def\leq{\leqslant}
\def\geq{\geqslant}
\def\le{\leqslant}
\numberwithin{equation}{section}
\def\({\bigl(}
\def\){\bigr)}
\newtheorem{thm}{Theorem}
\numberwithin{thm}{section}
\newtheorem{cor}[thm]{Corollary}
\newtheorem{lemma}[thm]{Lemma}
\theoremstyle{definition}
\newtheorem{remark}[thm]{Remark}
\newcommand{\A}{\mathbf A}
\def\dfrac#1#2{\lower0.15ex\hbox{\large$\textstyle\frac{#1}{#2}$}}
\def\({\bigl(}
\def\){\bigr)}
\def\st{\mathrel{:}}
\let\eps=\varepsilon
\newcommand{\D}{\mathbf D}
\def\calE{\mathcal{E}}
\def\X{\boldsymbol{X}}
\def\calN{\mathcal{N}}
\def\calG{\mathcal{G}}
\def\dvec{\boldsymbol{d}}
\def\({\bigl(}
\def\){\bigr)}
\def\E{\operatorname{\mathbb{E}}}
\def\Reals{{\mathbb{R}}}
\def\Naturals{{\mathbb{N}}}
\def\Bin{\operatorname{Bin}}
\def\GoodS{S_{\rm good}}
\def\GoodT{T_{\rm good}}
\title{On the maximum number of common neighbours\\ in dense random regular graphs}
\author{
Mikhail Isaev\thanks{Research supported by  ARC DP220103074} \\
\small 
 School of Mathematics 
and Statistics
 \\[-0.8ex]
\small  UNSW Sydney\\[-0.8ex]
\small  Sydney, NSW, Australia\\[-0.8ex]
\small\tt  m.isaev@unsw.edu.au 
\and
  Maksim Zhukovskii\\
  \small School of Computer Science\\[-0.8ex]
\small University of Sheffield\\[-0.8ex]
\small  Sheffield, UK\\[-0.8ex]
\small\tt   m.zhukovskii@sheffield.ac.uk
}
\date{}
\begin{document}

\maketitle

\abstract{
We derive the distribution of  the maximum  number of common neighbours of a pair of vertices in a dense random regular graph.
The proof involves  two important steps.
 One step is to establish the extremal independence property: the asymptotic equivalence  with the maximum  component of a vector  with independent marginal distributions.
 The other step is to prove that the distribution of the number of common neighbours for each pair of vertices can be approximated by the binomial distribution.
}

\section{Introduction}

The distribution of the degree sequence in a dense binomial random graph $\calG(n,p)$ (that is, for constant $p\in (0,1)$) was thoroughly studied by Bollob\'{a}s in \cite{Bol1,Bol2}. In particular, the maximum degree after an appropriate scaling converges in distribution to 
the standard Gumbel distribution, which is an absolutely continuous distribution on $\mathbb{R}$ with the cdf $e^{-e^{-x}}$.
Even earlier, Ivchenko \cite{Ivchenko1973} showed that 
this is also true for the sparse case. 
An explanation of this limiting behaviour is that   the degree sequence of $\calG(n,p)$ is close to a vector of independent binomial random variables (see  McKay, Wormald~\cite{MW-indep} for a more detailed discussion), while the distribution of the maximum component of  such vectors   is well-studied in the extreme value theory (see, for example,  Nadarajah, Mitov~\cite{nadarajah2002}). Various extensions to other random models and more general extremal graph characteristics were considered in \cite{AGSW,CZL2012, IRZZ, Palka1987, RZ, Shang2012}.

In this paper, we consider the random graph $\calG_{n,d}$ chosen uniformly 
at random from the set of vertex-labelled $d$-regular graphs on $[n]:=\{1,\ldots,n\}$. Since all degrees are the same in this random graph model,  it is natural to consider the distribution of the maximum (or minumum) $2$-degree, which is the  number of common neighbours of a pair of vertices.
Our results show  that its limit distribution is also Gumbel, at least  in the dense case.

It is worth noting that limiting distributions for statistics  in dense random regular graphs are typically much harder to determine than in binomial random graphs due to dependencies for adjacencies and pattern appearances. 
Actually, we are aware of only one result of such kind, namely, the asymptotic normality of the number of subgraphs isomorphic to a given graph of a constant size due to Sah and Sawney~\cite{SS}, and for subgraphs of growing size nothing is known, see~\cite{Gao} for further discussion.

We proceed to the formal statement of our result.  
Let $d=d(n)$ be a sequence of non-negative integers
such that      
\begin{equation}\label{ass-main}
       d   = \lambda (n-1) \in \Naturals, \qquad  \text{$dn$ is even,}   \qquad \lambda(1-\lambda) = \Omega(1).
\end{equation}
For a graph $G$ on vertex set $[n]$ and a vertex $i \in [n]$, let $N_{i}(G) \subset [n]$ denote the  set of neighbours of  $i$ in  $G$. 
For $ij \in \binom{[n]}{2}$, let 
\[
    X_{ij}(n,d):= \left|N_i(\calG_{n,d}) \cap N_j(\calG_{n,d})\right|.
\]
 The following result establishes  the joint limiting distribution   of 
\begin{equation}\label{def:max-min}
    X_{\max}(n,d) =  \max_{ij \in \binom{[n]}{2}}  X_{ij}(n,d), \qquad 
   X_{\min}(n,d) =  \min_{ij \in \binom{[n]}{2}}  X_{ij}(n,d).   
\end{equation}


\begin{thm}\label{T:main}
Let  \eqref{ass-main} hold and
  $a_{n,d}, b_{n,d}$ be defined by
\begin{equation}
\begin{aligned}
 		a_{n,d} &:= \lambda^2 n + 2\lambda (1-\lambda) \sqrt{ n   \log n } \left(1- \dfrac{\log \log n}{8 \log n} - 
 		\dfrac{\log (32\pi)}{8 \log n}\right),
 		\\ b_{n,d} &:=  \dfrac12 \lambda(1-\lambda) \sqrt{\dfrac{n}{\log n}}.
\label{eq:a_n-b_n}
\end{aligned}
\end{equation}
Then,  as $n\to\infty$, the vector $\left(\dfrac{X_{\max}(n,d) -a_{n,d}}{b_{n,d}},\dfrac{2\lambda^2n-a_{n,d}-X_{\min}(n,d) }{b_{n,d}}\right)$ converges in distribution to a pair of  independent standard Gumbel random variables.

\end{thm}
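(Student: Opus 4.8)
The plan is to follow the two‑step scheme outlined above. Write $N=\binom{n}{2}$, and for real numbers $x,y$ put $t^+=a_{n,d}+b_{n,d}x$ and $t^-=2\lambda^2n-a_{n,d}-b_{n,d}y$, so that the theorem is equivalent to
\[
\Pr\bigl(X_{\max}(n,d)<t^+,\ X_{\min}(n,d)>t^-\bigr)\;\longrightarrow\;e^{-e^{-x}}\,e^{-e^{-y}}\qquad(n\to\infty).
\]
The first step is the \emph{extremal independence} reduction: to show that the left‑hand side is asymptotically equal to $\prod_{ij\in\binom{[n]}{2}}\Pr(t^-<X_{ij}<t^+)$, i.e.\ that the joint behaviour of the maximum and the minimum is the same as for an independent family with the same marginals. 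I would obtain this through a two‑colour Poisson (Chen--Stein) approximation for the pair of exceedance counts $M^+=\#\{ij:X_{ij}\ge t^+\}$ and $M^-=\#\{ij:X_{ij}\le t^-\}$, showing that $(M^+,M^-)$ converges jointly to a pair of independent Poisson variables with means $e^{-x}$ and $e^{-y}$; then $\Pr(X_{\max}<t^+,X_{\min}>t^-)=\Pr(M^+=0,M^-=0)\to e^{-e^{-x}}e^{-e^{-y}}$. The Chen--Stein bound needs three ingredients: the first‑moment asymptotics $N\,\Pr(X_{12}\ge t^+)\to e^{-x}$ and $N\,\Pr(X_{12}\le t^-)\to e^{-y}$; smallness of the ``local'' sums $\sum\Pr(X_{ij}\in I,\,X_{ik}\in I')$ over the $\Theta(n^3)$ pairs of pairs sharing a vertex, where $I,I'$ each range over the two tail intervals $[t^+,\infty)$ and $(-\infty,t^-]$; and a near‑factorisation $\Pr(X_{ij}\in I,\,X_{kl}\in I')=(1+o(1))\Pr(X_{ij}\in I)\Pr(X_{kl}\in I')$ for disjoint pairs $\{ij\},\{kl\}$. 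These joint upper‑ and lower‑tail estimates for codegrees are where the structure of $\calG_{n,d}$ is used; I would derive them either by edge‑switching arguments or by the complex‑analytic enumeration of $d$‑regular graphs with one or two prescribed codegrees, exploiting that in the dense regime $\calG_{n,d}$ resembles $\calG(n,\lambda)$ conditioned to be $d$‑regular.

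The second step fixes the marginal law of a single codegree $X_{12}$ with enough precision in the moderate‑deviation window $X_{12}-\lambda^2n\asymp\sqrt{n\log n}$. Conditioning on the neighbourhood of one endpoint, $X_{12}$ is the number of neighbours of the other endpoint inside a fixed set of size $d$ or $d-1$, which is a binomial random variable up to the regularity constraints on the rest of the graph; accounting for those constraints — equivalently, carrying out the codegree variance computation for $\calG(n,\lambda)$ conditioned on all degrees — yields $\E X_{12}=\lambda^2n+O(1)$ and $\Var X_{12}=(1+o(1))\,\lambda^2(1-\lambda)^2 n$, so that on the relevant scale $X_{12}$ is approximated by a binomial, and hence by a Gaussian, with this mean and this variance $\sigma^2$, with Gaussian‑type corrections to the tail. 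Solving $\Pr(X_{12}\ge t)=1/N$ to second order then reproduces exactly the centring and scaling of \eqref{eq:a_n-b_n}: with $\sigma=\lambda(1-\lambda)\sqrt{n}$ the leading correction $2\lambda(1-\lambda)\sqrt{n\log n}$ equals $\sigma\sqrt{2\log N}$, the scaling $b_{n,d}=\tfrac12\lambda(1-\lambda)\sqrt{n/\log n}$ equals $\sigma/\sqrt{2\log N}$, and the corrections $\tfrac{\log\log n}{8\log n}$ and $\tfrac{\log(32\pi)}{8\log n}$ come from the standard $\log\log N+\log(4\pi)$ term in the inversion of the Gaussian tail after substituting $N=\binom{n}{2}\sim n^2/2$. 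The lower tail is controlled by the same estimate; since the approximating binomial is, to the required order, symmetric about its mean $\approx\lambda^2n$ throughout this window, the reflected variable $2\lambda^2n-X_{12}$ has the same upper‑tail behaviour, which is exactly why $X_{\min}$ is centred at $2\lambda^2n-a_{n,d}$. Feeding the two steps into the classical extreme‑value theory for maxima of i.i.d.\ binomials with growing parameters — the lattice spacing is harmless because $b_{n,d}\to\infty$ — gives the asserted joint convergence to a pair of independent standard Gumbel variables.

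I expect the main obstacle to be the extremal‑independence step, specifically the control, uniform over all $\Theta(n^2)$ pairs and with relative error $o(1)$, of the two‑pair joint tails of codegrees: in $\calG_{n,d}$, unlike in $\calG(n,p)$, adjacencies are genuinely dependent through the hard degree constraints, so proving the near‑factorisation for disjoint pairs and the required smallness for pairs sharing a vertex demands either a delicate switching analysis or the full enumeration machinery for regular graphs with marked codegrees. A second, closely related difficulty — forced by the precise form of $a_{n,d}$ — is pushing the single‑codegree local limit theorem to second order throughout the moderate‑deviation range, and in particular pinning down $\Var X_{12}$ to relative precision $o(1/\log n)$, to which the $\log\log n/\log n$‑scale corrections in $a_{n,d}$ are sensitive.
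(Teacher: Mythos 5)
Your overall architecture (reduce to independence of the codegree family, then approximate a single codegree by a binomial and invert its tail) is the same as the paper's, and your second step is essentially its Corollary~\ref{L:dist} combined with the Nadarajah--Mitov result (Theorem~\ref{T:bin}); your accounting of the constants in \eqref{eq:a_n-b_n} is correct. The paper even sharpens your worry about needing $\Var X_{12}$ to precision $o(1/\log n)$: it proves a local limit theorem (Theorem~\ref{T:distribution_pair}) matching point probabilities of $X_{ij}(n,d)$ to those of $\Bin(N,p)$ throughout the window $|h-\lambda^2 n|\le \sqrt{n}\log n$, which yields the tail asymptotics at the $1/\binom{n}{2}$ level directly.

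The genuine gap is in your extremal-independence step. A two-colour Chen--Stein (or factorial-moment) argument cannot be run on the ingredients you list: in $\calG_{n,d}$ there is no dependency graph in which disjoint pairs are independent, so the pairs $\{ij\},\{kl\}$ with $\{i,j\}\cap\{k,l\}=\emptyset$ must be handled by the ``long-range'' term of the Poisson approximation, and that term requires controlling the dependence of one exceedance event on the whole collection (the $\sigma$-field, or at least the union) of all far-away exceedance events --- a pairwise near-factorisation $\Pr(X_{ij}\in I,\,X_{kl}\in I')=(1+o(1))\Pr(X_{ij}\in I)\Pr(X_{kl}\in I')$ is not sufficient, and upgrading it would force you to factorise joint tails of arbitrarily many codegrees simultaneously, which is exactly the high-moment computation one wants to avoid. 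The paper's solution is the key idea missing from your plan: a switching-based coupling (Theorem~\ref{t:phi}) between $\calG_{n,d}$ and $\calG_{n,d}$ conditioned on $X_{ij}(n,d)=h$, under which, with probability $1-o(\log^2 n/\sqrt{n})$, every other vertex's neighbourhood changes by at most $8$ elements; this bounds the mixing coefficient $\varphi(\A,\D)$ of the framework of Theorem~\ref{T:extremal}, which is precisely a bound on $\bigl|\Pr\bigl(\bigcup_{j\notin D_i}A_j\mid A_i\bigr)-\Pr\bigl(\bigcup_{j\notin D_i}A_j\bigr)\bigr|$, i.e.\ dependence against all distant events at once. For the overlapping pairs, the paper needs only an up-to-constant bound $\Pr(A_{ij}\cap A_{ij'})=O\bigl(\Pr(A_{ij})\Pr(A_{ij'})\bigr)$ (Lemma~\ref{L:eps-gamma}), not an asymptotic factorisation, so your hardest anticipated estimates are actually not required in that precise form. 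Without a coupling/mixing statement of this kind (or a full method-of-moments computation), your step one does not close.
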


We prove Theorem \ref{T:main} in  Section \ref{S:mainBig} using a general framework for  limiting distribution of the maximum  of dependent random variables  recently suggested by Isaev, Rodionov, Zhang, Zhukovskii   \cite{IRZZ}. This framework assumes that most variables are weakly dependent and satisfy a certain $\varphi$-mixing condition, while allowing strong dependencies between a few variables represented as some kind of  \emph{dependency graph}; see Section \ref{S:extremal} for the precise statement. A big advantage of the framework of  \cite{IRZZ} in application to extremal characteristics of random regular graphs is to avoid computation of high order moments, which can be very hard or even unfeasible.  This paper     adopts this framework to our particular problem, but
we   believe that 
the arguments of present work extend to many other similar problems for random regular graphs and other random discrete structures with intricate dependencies.

Finally, we recall that Babai, Erd\H{o}s,   Selkow~\cite{BES}  used the degree distribution of the binomial random graph   to prove the existence of linear-time algorithm that canonically labels almost all graphs, which implies that  graph isomorphism can be tested in linear time for almost all graphs.
Thus, the study of maximal number of common neighbours in random regular graphs   is also interesting from the algorithmic perspective, since the pair of vertices achieving maximal number of common neighbours can be used for   labelling and isomorphism testing.  To our knowledge, the existence of an efficient canonical labelling algorithm for asymptotically almost all dense regular graphs is an open question.  

 In the next section, we present some additional results that will appear in the proof of Theorem~\ref{T:main}, which can be of independent interest.


\subsection{Useful ingredients appearing in the proof of Theorem~\ref{T:main}}\label{S:ingredients}

Using enumeration results of McKay \cite{McKay2011}, 
we   derive a  local limit theorem for the number of common neighbours of a particular pair of vertices in $\calG_{n,d}$.  
Even though similar computations appeared in the literature before,  see, for example,  Krivelevich, Sudakov, Vu,  Wormald in~\cite{KSVW2001}, but we could not find a result strong enough for our purposes.

 \begin{thm}\label{T:distribution_pair}
 Let  \eqref{ass-main} hold. Then, for any distinct  $ij \in \binom{[n]}{2}$,   uniformly over $0\leq h\leq d$,
 \begin{itemize}
  \item[(a)]
 	$\displaystyle
 	\Pr\Big(X_{ij}(n,d) = h \mid ij \notin \calG_{n,d}\Big)  \sim  \frac{ \binom{d}{h}  \binom{n-d-2}{d-h} } {\binom{n-2}{d}}   \exp\left(\dfrac{\lambda}{1-\lambda} -   \dfrac{h} {\lambda(1-\lambda)n}\right);
 $
 \item[(b)]
  	$\displaystyle
 	\Pr\Big(X_{ij}(n,d) = h \mid ij \in \calG_{n,d}\Big)  \sim  \frac{ \binom{d-1}{h}  \binom{n-d-1}{d-h-1} } {\binom{n-2}{d-1}}   \exp\left(\dfrac{\lambda}{1-\lambda} -   \dfrac{h} {\lambda(1-\lambda)n}\right),
 $

\item [(c)]  In addition, if $h \sim \lambda^2n$ then 
$\displaystyle
 	  		\Pr\Big(X_{ij}(n,d) = h\Big)  	
 	  		    \sim  	\frac{\binom{d}{h} \binom{n-1-d}{d-h}}{ \binom{n-1}{d} }.
 	  	$

\end{itemize}
\end{thm}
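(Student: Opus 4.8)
The plan is to reduce the probability $\Pr(X_{ij}(n,d)=h)$ to a ratio of counts of $d$-regular graphs with prescribed local structure at the pair $ij$, and then invoke McKay's enumeration formula \cite{McKay2011} for the number of $d$-regular graphs with a given set of forbidden/forced edges (equivalently, the number of graphs containing a fixed small subgraph). Concretely, for part (a) I would condition on $ij\notin\calG_{n,d}$ and on the precise pair of neighbourhoods around $i$ and $j$; the number of ways to choose which $h$ of the remaining $n-2$ vertices are common neighbours, which $d-h$ are neighbours of $i$ only, and which $d-h$ are neighbours of $j$ only, contributes the combinatorial factor $\binom{d}{h}\binom{n-d-2}{d-h}\binom{n-d-2-(d-h)}{d-h}$ (or the analogous expression once one is careful about over/undercounting), and for each such choice the number of ways to complete to a $d$-regular graph is governed by McKay's asymptotic enumeration formula. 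The key point is that McKay's formula for the number of $d$-regular graphs on $[n]$ with a fixed subgraph $S$ at a bounded number of vertices has the shape $N_{n,d}\cdot \prod(\text{local factors})\cdot \exp(\text{correction})$, where the correction term is a quadratic form in the local degree deficiencies that, in the dense regime $\lambda(1-\lambda)=\Omega(1)$, evaluates to a quantity of the form $\tfrac{\lambda}{1-\lambda}-\tfrac{h}{\lambda(1-\lambda)n}+o(1)$. Dividing by the analogous count with $h$ unconstrained (i.e.\ summing over $h$, or just using the total count $\Pr(ij\notin\calG_{n,d})$) produces the claimed asymptotic formula; part (b) is identical with $d\mapsto d-1$ in the appropriate places and the edge $ij$ forced rather than forbidden.

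For part (c), I would write $\Pr(X_{ij}(n,d)=h)=\Pr(ij\notin\calG_{n,d})\Pr(X_{ij}=h\mid ij\notin\calG_{n,d})+\Pr(ij\in\calG_{n,d})\Pr(X_{ij}=h\mid ij\in\calG_{n,d})$, substitute the expressions from (a) and (b), use $\Pr(ij\in\calG_{n,d})\sim\lambda$ and $\Pr(ij\notin\calG_{n,d})\sim1-\lambda$ (these follow from McKay's formula or a symmetry/edge-counting argument), and simplify. The two binomial ratios in (a) and (b) are related by elementary identities; when $h\sim\lambda^2 n$ the exponential correction factors $\exp(\tfrac{\lambda}{1-\lambda}-\tfrac{h}{\lambda(1-\lambda)n})=\exp(\tfrac{\lambda}{1-\lambda}-\tfrac{\lambda}{1-\lambda}+o(1))\to 1$, so both conditional probabilities collapse to (asymptotically) the same hypergeometric-type expression $\binom{d}{h}\binom{n-1-d}{d-h}/\binom{n-1}{d}$ after combining the two cases and re-indexing; the weighted average then telescopes to the stated formula. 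I would double-check the bookkeeping by verifying that $\sum_h$ of the right-hand side of (c) is $1+o(1)$ over the relevant window of $h$.

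The main obstacle I anticipate is twofold. First, extracting the \emph{uniform} (over $0\le h\le d$) asymptotics: McKay's enumeration formula comes with error terms that depend on how far the forced subgraph pushes local degrees from $d$, and one must check that these errors remain $o(1)$ relative to the main term uniformly in $h$, including the extreme regimes $h=o(n)$ and $h$ near $d=\lambda(n-1)$; this requires carefully tracking the quadratic-form correction and the ranges of validity in \cite{McKay2011}. Second, the precise form of the constant $\tfrac{\lambda}{1-\lambda}$ in the exponent is delicate — it arises as the limit of a sum of small corrections of order $1/n$ over $\Theta(n)$ coordinates, so a naive $o(1)$-per-term bound is not enough and one needs the second-order term in the expansion of McKay's formula. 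Getting this constant exactly right (rather than up to a multiplicative $1+o(1)$ that does not vanish) is the crux, and it is presumably why the authors remark that the estimates in \cite{KSVW2001} were not strong enough for their purposes. Everything else — the combinatorial prefactors, the passage from (a),(b) to (c), the hypergeometric simplification when $h\sim\lambda^2 n$ — should be routine once the enumeration input is pinned down.
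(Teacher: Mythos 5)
Your proposal follows essentially the same route as the paper: condition on the exact neighbourhoods of $i$ and $j$, use McKay's dense enumeration asymptotics to evaluate the probability of each configuration (with the exponential correction emerging from second-order terms), sum over the $\binom{n-2}{d}\binom{d}{h}\binom{n-2-d}{d-h}$ choices, use $\Pr(ij\in\calG_{n,d})\sim\lambda$, and get part (c) by the law of total probability with the exponential factor tending to $1$ when $h\sim\lambda^2 n$. The crux you flag (extracting the constant $\frac{\lambda}{1-\lambda}-\frac{h}{\lambda(1-\lambda)n}$ uniformly in $h$) is exactly where the paper's work lies: rather than a one-shot fixed-subgraph formula, it exposes $N_i$ and then $N_j$ sequentially via Lemma~\ref{l:neighb} (ratios of the plain enumeration formula \eqref{eq:enumeration}), so the exponential factor drops out of elementary product expansions.
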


In fact, to prove Theorem \ref{T:main}, we only need the following corollary of Theorem \ref{T:distribution_pair}  that shows that the distribution of $X_{ij}(n,d)$ can be approximated by the binomial distribution.
 \begin{cor}\label{L:dist}
 Let $ d = \lambda (n-1) \in \Naturals$  be  such that $dn$ is even and  $\lambda(1-\lambda) = \Omega(1)$. 
 Then, for any distinct  $ij \in \binom{[n]}{2}$, the following  hold.
 	\begin{itemize}
 	\item[(a)] 
 	 For all integer $h$ such that  
 	  $h =    \left(1+ O\left(\dfrac{\log n }{\sqrt{n}}\right)\right)\lambda^2 n$, 
 	  \[
 	  	\Pr \Big( X_{ij}(n,d) = h\Big)    \sim  \Pr(\xi  = h),
 	  \]
 	  where $\xi$ is  distributed according to $\Bin(N,p)$ with
 	  \[
 	  			N := \left\lfloor \dfrac{\lambda}{2-\lambda}n \right\rfloor, \qquad p:=\lambda(2-\lambda). 
 	  \]  
 \item[(b)]
    With probability at least $1 - e^{-\omega(\log n)}$, 
 	\[
 	\Big| X_{ij}(n,d) -\lambda^2 n \Big|   \leq \sqrt{n} \log n.
 	 	\]
  	  \end{itemize}	
 \end{cor}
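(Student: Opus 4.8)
The plan is to deduce both parts from Theorem~\ref{T:distribution_pair} by a direct comparison of the ratio of consecutive probabilities with the analogous ratio for the binomial law, plus a crude tail estimate. For part~(a), fix a distinct pair $ij$ and recall that, by total probability, $\Pr(X_{ij}=h)$ is a convex combination of the two conditional probabilities in Theorem~\ref{T:distribution_pair}(a) and (b), with weights $\Pr(ij\notin\calG_{n,d})=1-\lambda+o(1)$ and $\Pr(ij\in\calG_{n,d})=\lambda+o(1)$. In the regime $h=(1+O(\log n/\sqrt n))\lambda^2 n$ one checks that the two hypergeometric-type expressions $\binom{d}{h}\binom{n-d-2}{d-h}/\binom{n-2}{d}$ and $\binom{d-1}{h}\binom{n-d-1}{d-h-1}/\binom{n-2}{d-1}$ are asymptotically equal (their ratio is $1+O(1/n)$ uniformly in the relevant range, since shifting the parameters of the binomials by $O(1)$ perturbs them multiplicatively by $1+O(h/d+ (d-h)/(n-d))=1+O(1)$—one must be slightly careful here and compute the ratio exactly, using that $h/d\sim\lambda$ and $(d-h)/(n-d)\sim\lambda$, so the ratio tends to a constant which, after including the exponential correction factor, must still match; in fact the cleanest route is to observe that Theorem~\ref{T:distribution_pair}(c) already gives the unconditional answer directly when $h\sim\lambda^2 n$, so part~(a) of the corollary reduces to showing $\binom{d}{h}\binom{n-1-d}{d-h}/\binom{n-1}{d}\sim\Pr(\Bin(N,p)=h)$).

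So the real content of part~(a) is a Stirling-approximation computation: writing $L(h):=\log\binom{d}{h}+\log\binom{n-1-d}{d-h}-\log\binom{n-1}{d}$ and $M(h):=\log\binom{N}{h}+h\log p+(N-h)\log(1-p)$, I would show $L(h)-M(h)=o(1)$ uniformly for $h=(1+O(\log n/\sqrt n))\lambda^2 n$. Both $L$ and $M$ are, up to $o(1)$ errors from Stirling, smooth concave functions of $h$; it suffices to check that they have the same value and the same first two derivatives at the common mode $h_0=\lambda^2 n(1+o(1))$, and that the third derivatives are $O(1/h^2)$ so that the $O(\log n/\sqrt n)\cdot\sqrt n$-sized window contributes $o(1)$. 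The choice $N=\lfloor \lambda n/(2-\lambda)\rfloor$, $p=\lambda(2-\lambda)$ is exactly what matches $Np=\lambda^2 n$ and $Np(1-p)=\lambda^2(1-\lambda)^2 n=$ the variance read off from the second derivative of $L$; this is the calculation that pins down $N$ and $p$, and it is the step most likely to hide an annoying constant, so I would carry out the second-derivative matching carefully (the variance of $X_{ij}$ from the hypergeometric-type formula is $\lambda^2(1-\lambda)^2 n+o(n)$, and one needs $Np(1-p)$ to agree to leading order, which it does).

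For part~(b), I would use part~(a) together with a union bound over $h$ outside the window $|h-\lambda^2 n|\le \sqrt n\log n$. By part~(a), for $h$ in the range $|h-\lambda^2 n|\le C\sqrt n\log n$ the probability $\Pr(X_{ij}=h)$ is within a constant factor of $\Pr(\Bin(N,p)=h)$, and summing the binomial tail from $\lambda^2 n+\sqrt n\log n$ outward gives, by a Chernoff bound, a contribution of order $\exp(-c(\log n)^2)=e^{-\omega(\log n)}$. For $h$ even further out one cannot invoke part~(a), but there the trivial bound suffices: actually it is cleanest to note that Theorem~\ref{T:distribution_pair}(a),(b) hold uniformly over \emph{all} $0\le h\le d$, so $\Pr(X_{ij}=h)\le C\binom{d}{h}\binom{n-d-2}{d-h}/\binom{n-2}{d}$ for every $h$, and the full hypergeometric tail beyond $\sqrt n\log n$ from the mean is again $e^{-\omega(\log n)}$ by a standard hypergeometric concentration inequality. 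Summing these bounds over the $O(n)$ values of $h$ only costs a factor $n=e^{O(\log n)}$, which is absorbed into $e^{-\omega(\log n)}$.

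The main obstacle, and the only place needing genuine care, is the uniform Stirling estimate in part~(a): one must control the error in $\log\binom{d}{h}$ etc.\ uniformly as $h$ ranges over a window of width $\Theta(\sqrt n\log n)$ around $\lambda^2 n$, and verify that the discrepancy $L(h)-M(h)$ stays $o(1)$ across the whole window rather than just at the mode. This is handled by a Taylor expansion of $L-M$ about $h_0$: the zeroth- and first-order terms vanish by the matching of mode and variance, and the remaining terms are bounded by $\big(\text{window width}\big)^2\cdot\sup|L''-M''|$ together with $\big(\text{window width}\big)^3\cdot\sup|L'''|$; since $L''-M''=O(1/n^2)$ after the parameters are matched and $|L'''|,|M'''|=O(1/n^2)$ in this range, these are $O((\sqrt n\log n)^2/n^2)+O((\sqrt n\log n)^3/n^2)=o(1)$, so uniformity follows. (Strictly, one should double-check that the floor in the definition of $N$ shifts $M$ by only $o(1)$, which is immediate since changing $N$ by $1$ changes $\log\binom{N}{h}+(N-h)\log(1-p)$ by $\log\frac{N+1}{N+1-h}+\log(1-p)=\log\frac{(N+1)(1-p)}{N+1-h}=o(1)$ as $(N+1)(1-p)\sim N+1-h$.)
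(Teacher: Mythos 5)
Your proposal is correct and follows essentially the same route as the paper: reduce to showing $\binom{d}{h}\binom{n-1-d}{d-h}\big/\binom{n-1}{d}\sim\Pr(\xi=h)$ (the paper re-derives this reduction from Theorem~\ref{T:distribution_pair}(a),(b) via total probability, while you may equally invoke part~(c)), carry out a local Gaussian/Stirling comparison uniformly over the window $|h-\lambda^2 n|=O(\sqrt{n}\log n)$ (the paper does this with the de Moivre--Laplace theorem applied to each binomial coefficient rather than your Taylor expansion of the log-ratio, which is the same computation organised differently), and deduce part~(b) from the uniformity of Theorem~\ref{T:distribution_pair} together with exponential tail bounds for hypergeometric distributions. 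The only blemishes are minor quantitative overstatements (e.g.\ the claimed $O(1/n)$ closeness of the two conditional expressions, and $\sup|L''-M''|=O(1/n^2)$ over the whole window rather than $O(\log n\,/\,n^{3/2})$), which do not affect the validity of the conclusion.
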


We prove Theorem~\ref{T:distribution_pair} and Corollary \ref{L:dist} in Section~\ref{S:distribution}.  In particular, our proof relies on the next result, which is useful  to relate $\calG_{n,d}$  and its conditional version  given the number of common neighbours of any two vertices without significant change in the graph structure. We need it to
 establish the aforementioned  $\varphi$-mixing condition for the framework of \cite{IRZZ}.
\begin{thm}\label{t:phi}
  		   Let the assumptions of Theorem \ref{T:main} hold and let $h\in [d]$ satisfy
    \[  
  	h  = \left (1+ O\left(\frac{\log n}{\sqrt{n}}\right) \right)\lambda^2 n \]
   Let 
    $i,j\in [n]$ be any two vertices of 
  $\calG_{n,d}$.
  		Then, there exists a coupling  $(\calG_{n,d},\calG_{n,d}^h)$ satisfying the following two conditions:
  		\begin{itemize}
  				\item[(i)] $\calG_{n,d}^h$ is uniformly distributed on the set of $d$-regular graphs 
  				such that $i$ and $j$ have exactly $h$ common neighbours;
  	 \item[(ii)] with probability $1 -o\left(\dfrac{\log^2 n}{\sqrt{n}}\right)$,
  		the neighbourhoods of each  vertex $k \notin [n]\setminus\{i,j\}$ in graphs $\calG_{n,d}$ and  $\calG_{n,d}^h$ differ by at most 8 elements.
  		\end{itemize}
  	\end{thm}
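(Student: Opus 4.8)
The plan is to pass from $\calG_{n,d}$ to $\calG_{n,d}^h$ along a chain of single \emph{switchings}, each changing the number of common neighbours of $i$ and $j$ by exactly one and modifying the neighbourhoods of $i$, of $j$, and of only boundedly many further vertices, each by boundedly many elements. First I would expose $h':=X_{ij}(\calG_{n,d})$. A direct concentration estimate for $\calG_{n,d}$ --- a bounded-difference inequality under edge switchings together with the elementary fact $\E X_{ij}=\lambda^2 n+O(1)$, argued without invoking Corollary~\ref{L:dist} (whose proof uses the present theorem, so circularity must be avoided) --- gives $|h'-\lambda^2 n|\le\sqrt n\log n$ with probability $1-e^{-\omega(\log n)}$, while the hypothesis gives $|h-\lambda^2 n|=O(\sqrt n\log n)$. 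Thus, on a ``good'' event of probability $1-e^{-\omega(\log n)}$, the number of required steps is $L:=|h-h'|=O(\sqrt n\log n)$ and the entire chain stays in the window $W$ of integers within $O(\sqrt n\log n)$ of $\lambda^2 n$; off this event I let the chain run anyway (possibly long) --- it is still a legitimate coupling, and this event's probability is absorbed into the $o(\log^2 n/\sqrt n)$ budget.

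\textbf{The one-step coupling.} For $m\in W$ let $\mathcal{G}_m$ denote the set of $d$-regular graphs on $[n]$ in which $i,j$ have exactly $m$ common neighbours. I would fix an explicit up-switching which, given $G\in\mathcal{G}_m$, chooses $v\in N_i(G)\setminus N_j(G)$ and reroutes a bounded number of incident edges so as to make $v$ a common neighbour of $i,j$ and to create or destroy \emph{no} other common neighbour of this pair, together with the reverse down-switching; each such move alters the neighbourhood of one of $i,j$, of $v$, and of $O(1)$ other vertices, by $O(1)$ elements. The crux is that the bipartite ``switching graph'' between $\mathcal{G}_m$ and $\mathcal{G}_{m+1}$ is \emph{nearly biregular}: for all but an $o(\eps_n)$-fraction (under the uniform measure) of the graphs on either side, the number of moves leaving that graph equals a common ideal value up to relative error $\eps_n=o(\log n/n)$. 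Such smallness is available precisely because fixing $X_{ij}=m$ already determines the dominant combinatorial parameters ($|N_i\cup N_j|$ and the sizes of $N_i\setminus N_j$, $N_j\setminus N_i$), so that the genuinely fluctuating quantities --- codegrees and edge counts between these linear-sized sets --- enter the switching count only at lower order and contribute just $O(\mathrm{polylog}/n)$ relative fluctuation; the precise count I would read off from McKay's enumeration machinery~\cite{McKay2011} (already used in this paper), and the concentration over $\mathcal{G}_m$ by transferring a super-polynomially small deviation bound from $\calG_{n,d}$. Given near-biregularity, a routine construction yields for each $m\in W$ a coupling of the uniform distributions on $\mathcal{G}_m$ and $\mathcal{G}_{m+1}$ with \emph{exact} marginals, under which the two samples differ by one non-degenerate switching with probability $1-O(\eps_n)$.

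\textbf{Assembling the chain.} Composing these one-step couplings along $h'\to h$ produces a coupling $(\calG_{n,d},\calG_{n,d}^h)$ in which $\calG_{n,d}^h$ is \emph{exactly} uniform on $\mathcal{G}_h$, giving (i). For (ii): with probability at least $1-e^{-\omega(\log n)}-\sum_{m\in W}O(\eps_n)=1-o(\log^2 n/\sqrt n)$ the chain has $L=O(\sqrt n\log n)$ steps, each a genuine single switching; this creates $O(\sqrt n\log n)$ edge-changes whose endpoints are roughly uniformly spread over $[n]$, so a balls-in-bins estimate shows that, again with probability $1-o(\log^2 n/\sqrt n)$, no vertex other than $i,j$ is an endpoint of more than $8$ of them (the bound $8$ is generous --- even a small absolute constant would do --- and reflects exactly the slack the $o(\log^2 n/\sqrt n)$ budget permits). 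Two auxiliary points: since the switchings above keep the adjacency of $ij$ fixed while $\Pr(ij\in E\mid X_{ij}=m)$ varies with $m$ by $O(1/n)$, I would let an $O(1/n)$-fraction of the moves at each level also flip the adjacency of $ij$ by a local switch touching only $i$, $j$ and $O(1)$ further vertices, correcting that marginal along the way; and the transfer of deviation bounds to $\mathcal{G}_m$ requires a crude lower bound $\Pr(X_{ij}=m)\ge e^{-n^{1/2+o(1)}}$ for $m\in W$, obtainable by elementary means (Chebyshev, using $\Var X_{ij}=\Theta(n)$, plus a coarse switching-entropy comparison of consecutive classes) and therefore not relying on Theorem~\ref{T:distribution_pair}.

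\textbf{Main obstacle.} The technical heart --- and the step I expect to be hardest --- is the near-biregularity estimate: one must design the switching so that it moves $X_{ij}$ by exactly one with no side effect on the common-neighbour relation of $\{i,j\}$, then count the moves accurately enough (relative error $o(\log n/n)$) \emph{uniformly over almost all} graphs in each conditional class, handling the lower-order corrections and the degenerate moves (created multi-edges, accidental coincidences with $i$ or $j$). This is precisely the worst-case switching count at the core of dense random regular graph enumeration, made heavier here by the structure imposed by the distinguished pair $\{i,j\}$; it is tamed by needing the estimate only for typical members of the classes, with the atypical ones diverted into the resampling part of the coupling.
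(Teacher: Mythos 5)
Your overall architecture is the same as the paper's: walk from $h'=X_{ij}(\calG_{n,d})$ to $h$ through the conditional classes via single switchings, couple consecutive classes using near-biregularity of the switching bipartite graph, and then control how often each vertex is touched. However, two steps as you propose them would not go through. First, your route to near-biregularity rests on a false premise and a quantitatively inadequate transfer. There is no circularity to avoid: Corollary~\ref{L:dist} is proved from Theorem~\ref{T:distribution_pair} via McKay's enumeration, independently of Theorem~\ref{t:phi}, so you may use it freely (the paper does, in the gluing step). More importantly, transferring an unconditional deviation bound to $\mathcal G_m$ by dividing by $\Pr(X_{ij}=m)$ cannot give the per-step precision you need: the per-step failure probability must be $o(\log n/n)$ (there are $\Theta(\sqrt n\log n)$ steps and the total budget is $o(\log^2 n/\sqrt n)$), so the in-class exceptional probability and the relative degree error must both be $o(\log n/n)$; with your crude lower bound $\Pr(X_{ij}=m)\ge e^{-n^{1/2+o(1)}}$ the transferred bound is vacuous, and even with the true local-limit value $e^{-O(\log^2 n)}$ the unconditional tail at deviation scale $o(n\log n)$ for codegree-type sums is itself only about $e^{-\Theta(\log^2 n)}$, so the division eats it. The paper avoids any transfer: for $G$ in the class it writes $\deg_D(G)=(1+O(n^{-1}))(d-h)^2d-\sum_{u,v}|\{w: uw,vw\in G\}|$ and, conditioning on the neighbourhoods of $i,j,u$, observes that the remaining graph is a uniform almost-regular graph, to which the concentration Lemma~\ref{l:concentration} applies directly (Lemma~\ref{l:condeg}); this is the step you would need to replace your transfer with.

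Second, your balls-in-bins argument for part (ii) assumes the switching endpoints are ``roughly uniformly spread'', but the $L$ steps are strongly dependent (each graph is obtained from the previous one), so you need a per-step bound, uniform over the conditioning on the current graph, that a fixed vertex is used in the switching with probability $O(1/n)$ --- and this is not automatic from having exact-marginal one-step couplings, since an abstract coupling could concentrate its choice of switching on particular vertices given one endpoint. The paper builds exactly this into the abstract coupling theorem (the second clause of Theorem~\ref{T:coupling}, bounding $\max_x\Pr(XY\in H\mid X=x)$, applied with $H=D^w$, $\varDelta_S(D^w)\le n^2$ and $|D|/|S|=\Theta(n^3)$ in Lemma~\ref{l:2coupling}), and it handles the $u,v$ roles by a clean deterministic observation: along a monotone chain a vertex can serve as $u$ or $v$ at most once, because the switching permanently changes its adjacency pattern to $\{i,j\}$. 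You would need some substitute for both ingredients. Finally, your extra moves flipping the adjacency of $ij$ are unnecessary: the one-step coupling is between the full conditional classes, exact uniform marginals come from the resampling in the abstract construction, and the $O(1/n)$ imbalance in the $ij$-status proportions is absorbed into the $o(\log n/n)$ per-step failure probability.
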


We prove Theorem \ref{t:phi} in Section~\ref{S:cond}.


\subsection{Structure of the paper}
The paper  is organised as follows.
 In Section~\ref{sc:pre}, we recall the result 
 of Isaev, Rodionov, Zhang, Zhukovskii   \cite{IRZZ} that gives sufficient conditions for the extremal independence property: the distribution of the maximum of dependent random variables is asymptotically equivalent to   the distribution of the maximum of their independent copies.
 We also derive some probability estimates for almost regular random graphs that will appear repeatedly in the proofs and  are somewhat straightforward from known results  available in the literature.

In Section~\ref{S:mainBig}, we prove Theorem \ref{T:main}. Furthermore, we prove convergence rates of the order $o\left(\frac{\log^2 n}{\sqrt{n}}\right)$  for the extremal independence property for the numbers of common neighbours in $\calG_{n,d}$.  The arguments in  Section~\ref{S:mainBig} rely on the results presented in Section \ref{S:ingredients}, whose proofs are given in further sections.

In   Section~\ref{S:distribution},   we   prove our local limit results for the number of common neighbours of a pair of vertices,  Theorem \ref{T:distribution_pair} 
 and  Corollary \ref{L:dist}.  The proof of Theorem  \ref{T:distribution_pair} is by exposing the neighbourhood of a vertex and then applying  the  estimates for almost regular random graphs   from Section \ref{S:almost}.

In Section~\ref{S:cond}, we prove Theorem  \ref{t:phi}. Generalising \cite[Theorem~2.1]{IMSZ},  we give an abstract result  on the existence of coupling in a bipartite graph most of whose vertices have degrees that are not too small in comparison to the average.
 Using  this abstract result, we get
  a coupling $(\calG_{n,d}^h, \calG_{n,d}^{h+1})$  that does not change much the graph structure. 
Then, we combine several couplings $(\calG_{n,d}^h, \calG_{n,d}^{h+1})$ to get the desired coupling  of  $\calG_{n,d}$ and  $\calG_{n,d}^h$.


\section{Preliminaries}
\label{sc:pre}

Here, we collect all preliminary results that we use in the proofs. 
In Section \ref{S:extremal}, we state  sufficient conditions from \cite{IRZZ}  for a random vector $\X  = (X_1,\ldots,X_m)^T$ to satisfy the extremal independence property: as $n \rightarrow \infty$
\begin{equation}
\left| \Pr\left( \max_{i \in [m]} X_i \leq x \right)
- \prod_{i\in [m]} \Pr \left( X_i \leq x \right) \right|
\longrightarrow 0 \qquad \text{for any fixed $x\in\Reals$,}
\label{a_e_indep_formula}
\end{equation}
where $m=m(n) \in \Naturals$ and $X_i= X_i(n)\in \Reals$ for all $i \in [m]$.
In Section \ref{S:almost}, we give  probability bounds for random almost regular graphs  needed to verify these sufficient conditions in application to the maximum number of common neighbours. 

All asymptotics in this paper refer to the passage of $n$ to infinity and the notations
$o(\cdot)$, $O(\cdot)$, $\Omega(\cdot)$ have the standard meaning.
We also use the notation $f(n)\sim g(n)$ when $f(n) = (1+o(1))g(n)$.

\subsection{Estimates from the extreme value theory}\label{S:extremal}

The extremal independence property (\ref{a_e_indep_formula}) is equivalent to
\begin{equation}\label{a_e_indep_formula_for_A}
\left|\Pr\left( \bigcap_{i \in [m]} \overline{ A_i } \right)-\prod_{i\in [m]} \Pr\left(\overline{ A_i } \right)\right|\longrightarrow 0,  
\end{equation}
where
the system of events $\A$ is defined by
\begin{align}
\A   := (A_i)_{i \in [m]},
\quad
A_i := \{ X_i > x \},
\label{event}
\end{align}
and $\overline{ A_i } $ is the complement event of $A_i$.  Throughout this section, we always assume that $\Pr(A_i)>0$ for all $i\in [m]$, as the presence of events of probability zero makes no difference for \eqref{a_e_indep_formula_for_A}.

We represent the dependencies among the events of $\A$ by a graph $\D$
on the vertex set $[m]$ with edges
indicating the pairs of `strongly dependent' events,
while non-adjacent vertices correspond to `weakly dependent' events.
Let  $D_i \subseteq [m]$ be the closed neighbourhood of vertex $i$ in graph $\D$.
We allow $\D$ to be a directed graph, that is,
there might exist $i, j \in [m]$, such that $i \in D_j$ and $j \not\in D_i$.
The quality of the representation of the dependencies for $\A$
by a graph $\D$ is measured by the following mixing coefficient:
\begin{align}\label{phi}
\varphi(\A, \D)
:= \max_{i \in [m]}  \left|
\Pr\left(\bigcup_{j \in [i-1] \setminus D_i   } A_j    \mid A_i\right)
- \Pr\left(\bigcup_{j \in [i-1] \setminus D_i   } A_j   \right)  \right|.
\end{align}
The influence of `strongly dependent' events is measured by declustering coefficients
$\Delta_1$ and $\Delta_2$ defined by
\begin{align}
\Delta_1(\A, \D)
&:= \sum_{i\in[m]}\sum_{j \in [i - 1] \cap D_i }\Pr( A_i\cap A_j),
\label{del1} \\
\Delta_2(\A, \D)
&:= \sum_{i\in[m]}\sum_{j \in [i - 1] \cap D_i }\Pr( A_i)\Pr(A_j).
\label{del2}
\end{align}
The choice of graph $\D$
leads to the trade-off between
the mixing coefficient $\varphi(\A, \D)$
and declustering coefficients $\Delta_1(\A, \D)$ and $\Delta_2(\A, \D)$,
since $\Delta_1(\A, \D)$ and $\Delta_2(\A, \D)$
increase as $\D$ gets denser, and $\varphi(\A, \D)$ typically decreases.

Our main tool for Theorem \ref{T:main} is the following bound, which is a simplified version of \cite[Theorem 2.1]{IRZZ}.

\begin{thm}[Isaev, Rodionov, Zhang, Zhukovskii \cite{IRZZ}]\label{T:extremal}
For any system $\A = (A_i)_{i \in [m]}$
and graph $\D$ with vertex set $[m]$,
the following bound holds:
\begin{align}
\left| \Pr\left( \bigcap_{i \in [m]} \overline{ A_i } \right)
- \prod_{i\in [m]} \Pr\left(\overline{ A_i } \right) \right|
\le \left(1 - \prod_{i\in [m]} \Pr\left(\overline{ A_i } \right) \right) \varphi +
\max\{ \Delta_1, \Delta_2 \},
\label{eq:Tmain}
\end{align}
where $\varphi = \varphi(\A, \D)$,
$\Delta_1 = \Delta_1(\A, \D)$,
and $\Delta_2 = \Delta_2(\A, \D)$.
\end{thm}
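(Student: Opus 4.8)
\noindent\emph{Proof plan.}
Write $p:=\prod_{i\in[m]}\Pr(\overline{A_i})$ and $q:=\Pr\big(\bigcap_{i\in[m]}\overline{A_i}\big)$; the goal is the bound \eqref{eq:Tmain}, i.e.\ $|q-p|\le(1-p)\varphi+\max\{\Delta_1,\Delta_2\}$, where $\varphi=\varphi(\A,\D)$, $\Delta_1=\Delta_1(\A,\D)$, $\Delta_2=\Delta_2(\A,\D)$. The plan is to interpolate between the two products by a telescoping sum: for $0\le k\le m$ set
\[
P_k:=\Pr\Big(\bigcap_{i\le k}\overline{A_i}\Big)\prod_{i>k}\Pr(\overline{A_i}),
\]
so that $P_0=p$, $P_m=q$, and $q-p=\sum_{k=1}^m(P_k-P_{k-1})$. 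A one-line computation shows $P_k-P_{k-1}=\big(\prod_{i>k}\Pr(\overline{A_i})\big)E_k$, where, with $C_k:=\bigcap_{i<k}\overline{A_i}$,
\[
E_k:=\Pr(A_k)\Pr(C_k)-\Pr(A_k\cap C_k)=-\Cov(\mathbf{1}_{A_k},\mathbf{1}_{C_k}).
\]
Thus the whole task reduces to bounding the covariance-type quantity $E_k$, which quantifies how much the occurrence of $A_k$ perturbs the joint non-occurrence of the earlier events $A_1,\dots,A_{k-1}$.

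To estimate $E_k$ I would split the earlier events according to the dependency graph $\D$: let $G_k:=\bigcup_{j\in[k-1]\cap D_k}A_j$ gather the strongly dependent predecessors of $A_k$ and $B_k':=\bigcup_{j\in[k-1]\setminus D_k}A_j$ the weakly dependent ones, so that $C_k=\overline{B_k'}\cap\overline{G_k}$. The event $\overline{G_k}$ differs from the whole space by probability at most $\Pr(G_k)\le\sum_{j\in[k-1]\cap D_k}\Pr(A_j)$, whereas the correlation of $A_k$ with $\overline{B_k'}$ is controlled directly by the mixing coefficient:
\[
\big|\Pr(A_k\cap\overline{B_k'})-\Pr(A_k)\Pr(\overline{B_k'})\big|
=\Pr(A_k)\,\big|\Pr(B_k'\mid A_k)-\Pr(B_k')\big|\le\Pr(A_k)\,\varphi,
\]
by the definition of $\varphi(\A,\D)$ (legitimate since $\Pr(A_k)>0$). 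Feeding this into $E_k$ via the elementary inclusion bounds $\Pr(\overline{B_k'})-\Pr(G_k)\le\Pr(C_k)\le\Pr(\overline{B_k'})$ and $\Pr(A_k\cap\overline{B_k'})-\Pr(A_k\cap G_k)\le\Pr(A_k\cap C_k)\le\Pr(A_k\cap\overline{B_k'})$, I expect to arrive at the two one-sided estimates
\[
-\Pr(A_k)\,\varphi-\Pr(A_k)\Pr(G_k)\ \le\ E_k\ \le\ \Pr(A_k)\,\varphi+\Pr(A_k\cap G_k).
\]

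The last step is to sum over $k$. For the $\varphi$-terms the crucial observation is the telescoping identity
\[
\sum_{k=1}^m\Big(\prod_{i>k}\Pr(\overline{A_i})\Big)\Pr(A_k)
=\sum_{k=1}^m\Big(\prod_{i>k}\Pr(\overline{A_i})-\prod_{i\ge k}\Pr(\overline{A_i})\Big)=1-p,
\]
which is exactly what produces the prefactor $1-p$ in \eqref{eq:Tmain}. For the $G_k$-terms I would use $\prod_{i>k}\Pr(\overline{A_i})\le1$ together with the union bounds $\Pr(A_k\cap G_k)\le\sum_{j\in[k-1]\cap D_k}\Pr(A_k\cap A_j)$ and $\Pr(A_k)\Pr(G_k)\le\sum_{j\in[k-1]\cap D_k}\Pr(A_k)\Pr(A_j)$, whose double sums over $k$ are precisely $\Delta_1$ and $\Delta_2$. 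If $q\ge p$, summing the upper bound on $E_k$ gives $q-p\le(1-p)\varphi+\Delta_1$; if $q<p$, summing the lower bound gives $p-q\le(1-p)\varphi+\Delta_2$; in either case the right-hand side is at most $(1-p)\varphi+\max\{\Delta_1,\Delta_2\}$, which is \eqref{eq:Tmain}.

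I expect the only genuinely delicate part to be the bookkeeping in this final step. One must keep the weight $\prod_{i>k}\Pr(\overline{A_i})\cdot\Pr(A_k)$ attached to each $\varphi$-contribution all the way through, so that it collapses by telescoping to $1-p$; a cruder argument would leave the essentially useless bound $m\varphi$. Likewise, one must choose the correct one-sided estimate of $E_k$ in each of the two cases $q\ge p$ and $q<p$, so that at most one of $\Delta_1,\Delta_2$ ever enters — which is what yields $\max\{\Delta_1,\Delta_2\}$ rather than $\Delta_1+\Delta_2$. Everything else is routine inclusion–exclusion and requires no moment computations.
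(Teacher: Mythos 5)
Your proposal is correct, but note that the paper itself contains no proof of Theorem~\ref{T:extremal}: it is imported verbatim (in simplified form) from \cite{IRZZ}, so there is no internal argument to compare against. Judged on its own, your telescoping interpolation is a complete and valid proof. The identity $q-p=\sum_{k}\bigl(\prod_{i>k}\Pr(\overline{A_i})\bigr)E_k$ with $E_k=\Pr(A_k)\Pr(C_k)-\Pr(A_k\cap C_k)$ is exact; the split $C_k=\overline{B_k'}\cap\overline{G_k}$ along the dependency graph is the right one, and the two-sided sandwich bounds you state do follow from $A_k\cap C_k=(A_k\cap\overline{B_k'})\setminus G_k$ and $C_k\subseteq\overline{B_k'}$, with the mixing coefficient \eqref{phi} controlling $\bigl|\Pr(A_k\cap\overline{B_k'})-\Pr(A_k)\Pr(\overline{B_k'})\bigr|\le\Pr(A_k)\varphi$ (using the standing assumption $\Pr(A_k)>0$). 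The two bookkeeping points you flag are handled correctly: keeping the weight $\prod_{i>k}\Pr(\overline{A_i})\Pr(A_k)$ attached to the $\varphi$-terms collapses by telescoping to $1-p$, and choosing the upper estimate of $E_k$ when $q\ge p$ (yielding $\Delta_1$ via \eqref{del1}) and the lower estimate when $q<p$ (yielding $\Delta_2$ via \eqref{del2}) is exactly what produces $\max\{\Delta_1,\Delta_2\}$ rather than the sum. This interpolation between the joint probability and the product, with per-step covariance control, is also the standard route in the extreme-value/dependency-graph literature and is in the spirit of the argument in \cite{IRZZ}, so your proof is not an exotic detour; it simply supplies the details the present paper delegates to its reference.
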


We
will also need 
\cite[Theorem 3]{nadarajah2002}, stated below for reader's convenience.
This theorem  establishes the
 distribution of the maximum of independent binomial random variables. 
\begin{thm}[Nadarajah, Mitov \cite{nadarajah2002}]\label{T:bin}Let $p=p(n)\in (0,1)$ and $p(1-p)=\Theta(1)$. Also let $N=N(n) \in \Naturals$ and  $m=m(n) \in \Naturals$  satisfy
\[
	     N \gg \log^3 m \gg 1.  
\]
If $\xi_1,\ldots,\xi_m$ are $\Bin(N,p)$ independent random variables
  then $\left(\max_{i\in[m]}\xi_i-a_n^*\right)/b_n^*$ converges in distribution to a standard Gumbel random variable with  $a_n^*$ and $b_n^*$ defined by
 \begin{equation}\label{def:ab}
 \begin{aligned}
 	 a_n^*&= a_n^*(N,m,p):= pN +\sqrt{2 Np(1-p)  \log m}\left(1 - \dfrac{\log\log m}{4\log m} - \dfrac{\log( 2 \sqrt \pi)}{2\log m}\right),
 \\
 	b_n^* &= b_n^*(N,m,p):= \sqrt{\frac{ Np(1-p) }{2 \log m} }.
 \end{aligned}
 \end{equation}
%
 \end{thm}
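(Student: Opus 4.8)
The plan is to treat this as a textbook extreme-value computation: since the $\xi_i$ are independent, the cdf of the maximum factorises, so the whole problem collapses to a single sufficiently sharp binomial tail estimate. Fix $x\in\Reals$ and write $\mu := pN$, $\sigma := \sqrt{Np(1-p)}$, and $t := a_n^* + b_n^* x$. By independence,
\[
 \Pr\Big(\max_{i\in[m]}\xi_i \le t\Big) = \Pr(\xi_1 \le t)^m = (1-q_m)^m, \qquad q_m := \Pr(\xi_1 > t).
\]
As we will verify, $q_m = (1+o(1))e^{-x}/m \to 0$, so $(1-q_m)^m = \exp\big(-m q_m + O(m q_m^2)\big)$ with $m q_m^2 \to 0$, and the assertion $\Pr\big((\max_i\xi_i - a_n^*)/b_n^* \le x\big) \to e^{-e^{-x}}$ becomes equivalent to the single relation $m\,q_m \to e^{-x}$. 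This is the only thing to prove, and it must be shown for every fixed $x$ (all of $\Reals$ being continuity points of the Gumbel cdf).

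Next I would establish the binomial tail in the relevant moderate-deviation window. Let $z := (t-\mu)/\sigma$. Since $a_n^* - \mu = \sigma\sqrt{2\log m}\,(1+o(1))$ while $b_n^* x = o(\sigma\sqrt{2\log m})$, one reads off $z = \sqrt{2\log m}\,(1+o(1)) \to \infty$. The hypothesis $N \gg \log^3 m$ gives $z^6 = (2\log m)^3(1+o(1)) = o(N)$, i.e.\ $z = o(N^{1/6})$, which is exactly the range in which Cramér's moderate-deviation theorem (equivalently, a direct Stirling expansion of $\binom{N}{k}p^k(1-p)^{N-k}$ summed over the tail) yields, with $k=\lceil t\rceil$ and $z_k := (k-\mu)/\sigma$,
\[
  \Pr(\xi_1 \ge k) = (1+o(1))\,\big(1-\Phi(z_k)\big) = (1+o(1))\,\frac{1}{z\sqrt{2\pi}}\,\exp\!\big(-\tfrac12 z^2\big),
\]
using the classical tail asymptotic $1-\Phi(z)\sim \phi(z)/z$ and the fact $p(1-p)=\Theta(1)$. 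Passing from $\xi_1>t$ to $\xi_1\ge\lceil t\rceil$ and from $z_k$ to $z$ is harmless because $b_n^*\to\infty$: since $Np(1-p)\gg\log^3 m$, we have $b_n^*\gg\log m$, so the lattice spacing $1$ is $o(b_n^*)$; moreover $z_k=z+O(1/\sigma)$ changes $\exp(-\tfrac12 z^2)$ only by a factor $\exp\big(O(z/\sigma)\big)=1+o(1)$, as $z/\sigma\le\sqrt{2\log m}/\sigma\to0$. Hence $q_m=(1+o(1))\frac{1}{z\sqrt{2\pi}}\exp(-\tfrac12 z^2)$.

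It then remains to check that $a_n^*,b_n^*$ in \eqref{def:ab} are calibrated so that $m\,(1-\Phi(z))\to e^{-x}$. Substituting $t=a_n^*+b_n^*x$ into $z=(t-\mu)/\sigma$ and using $b_n^*/\sigma=1/\sqrt{2\log m}$ together with the identity $\tfrac12\log(2\sqrt\pi)=\tfrac14\log(4\pi)$ gives
\[
  z = \sqrt{2\log m}\,\Big(1 - \tfrac{\log\log m}{4\log m} - \tfrac{\log(4\pi)}{4\log m}\Big) + \frac{x}{\sqrt{2\log m}}.
\]
Squaring, the leading and cross terms produce $z^2 = 2\log m - \log\log m - \log(4\pi) + 2x + o(1)$, whence $\exp(-\tfrac12 z^2) = (1+o(1))\,\frac{\sqrt{\log m}\,\cdot 2\sqrt\pi}{m}\,e^{-x}$ and $\frac{1}{z\sqrt{2\pi}} = (1+o(1))\frac{1}{2\sqrt{\pi\log m}}$; multiplying, $\frac{1}{z\sqrt{2\pi}}\exp(-\tfrac12 z^2) = (1+o(1))\,e^{-x}/m$. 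Therefore $m(1-\Phi(z))\to e^{-x}$, and combined with the previous step $m\,q_m\to e^{-x}$, as required.

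The main obstacle is the tail estimate in the second step: one needs the \emph{relative} (not merely logarithmic) error in the binomial tail to tend to $1$ uniformly over the moderate-deviation window, and the condition $N\gg\log^3 m$ is precisely the threshold $z=o(N^{1/6})$ that makes the cubic Cramér correction $z^3/\sqrt N$ negligible. I would either invoke Cramér's moderate-deviation theorem directly, or keep the argument self-contained by expanding $\Pr(\xi_1=k)$ via Stirling's formula and summing the upper tail as a near-geometric series with ratio $\approx e^{-z/\sigma}$, which contributes the factor $\sigma/z$ converting the local probability $\tfrac{1}{\sigma\sqrt{2\pi}}e^{-z^2/2}$ into $\tfrac{1}{z\sqrt{2\pi}}e^{-z^2/2}$. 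The lower bound $N\gg\log^3 m$ guarantees both that this cubic term and the discreteness corrections vanish and that $b_n^*\to\infty$, while $\log^3 m\gg1$ forces $\log m\to\infty$, so that $z\to\infty$ and the extreme-value scaling is nondegenerate; the assumption $p(1-p)=\Theta(1)$ keeps $\sigma\asymp\sqrt N$ so that these estimates are uniform.
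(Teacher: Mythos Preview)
Your argument is correct and is the standard extreme-value computation: factorise by independence, reduce to $mq_m\to e^{-x}$, use a moderate-deviation tail estimate for the binomial in the window $z=o(N^{1/6})$ (which is exactly what $N\gg\log^3 m$ delivers), and then verify the calibration of $a_n^*,b_n^*$. The calculus in your third step checks out, and your handling of the lattice rounding via $b_n^*\to\infty$ and $z/\sigma\to 0$ is the right justification.

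There is, however, nothing to compare against: the paper does not prove Theorem~\ref{T:bin}. It is quoted from Nadarajah--Mitov~\cite{nadarajah2002} as a black-box input, with only the remark that the original is stated for fixed $p$ but the same proof goes through for $p(1-p)=\Theta(1)$. What you have written is essentially a reconstruction of that cited proof (and indeed confirms the paper's remark, since nowhere do you need $p$ constant---only $\sigma\asymp\sqrt N$). If you want the argument to be fully self-contained you should either cite a precise source for the moderate-deviation relation $\Pr(\xi_1\ge k)\sim 1-\Phi(z_k)$ in the regime $z_k=o(N^{1/6})$, or carry out the Stirling-plus-geometric-sum computation you sketch in the final paragraph; as written that step is asserted rather than proved.
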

Note that the original version of this theorem in \cite{nadarajah2002} is stated for a fixed $p\in(0,1)$, though exactly the same proof works for $p=p(n)$ bounded away both from 0 and 1.

\subsection{Estimates for random almost regular graphs}\label{S:almost}
A degree sequence $\dvec =\dvec(n)  =  (d_1, \ldots, d_n)^T$ is \emph{almost $d$-regular}, where $d= d(n)$, if 
\begin{equation*}
\max_{i \in [n] } |d_i - d|  = O(1).
 \end{equation*}   
We always assume the following: 
\begin{equation}\label{deg:ass}
\begin{aligned}
	 &\dvec =\dvec(n) \in \Naturals^n \text{ is almost $d$-regular degree sequence and} \\   &\lambda  (1-\lambda ) = \Omega(1),  \text{ where  $\lambda := d/(n-1)$.}
 \end{aligned}
\end{equation}
Under these assumptions,    the number  of graphs with degree sequence $\dvec$, denoted by $\calN(n,\dvec)$,  is a straightforward application of   \cite[Theorem 3]{McKay2011}:
\begin{equation}\label{eq:enumeration}
\calN(n,\dvec) \sim \sqrt{2} e^{1/4} \left( \left(\dfrac{\bar d}{n-1}\right)^{\frac{\bar d}{n-1}}\left(1-\dfrac{\bar d}{n-1}\right)^{1-\frac{\bar d}{n-1}}\right)^{\binom{n}{2}} \prod_{j\in [n]} \binom{n-1}{d_j},
\end{equation}
where 
\begin{equation*} 
	\bar{d}= \bar d(\dvec) := \frac{d_1+\cdots +d_n}{n}.
\end{equation*}
 Let $\calG_{\dvec}$ denote a uniform random graph with degree sequence $\dvec$.
 Recall that, for a graph $G$ on vertex set $[n]$ and a vertex $i\in [n]$, 
 $N_{i}(G) \subset [n]$ denotes  the  set of neighbours of $i$ in $G$.  
\begin{lemma}\label{l:neighb}
 Under assumptions \eqref{deg:ass}, we have 
	\begin{align*}
			\Pr(N_i(\calG_{\dvec}) =A) &\sim
 	\sqrt{2\pi \lambda (1-\lambda) n}  \ 
 	  \prod_{j \in A} \dfrac{ d_j}{n-1}  \prod_{j \notin A \cup\{i\} } \left(1 -  \dfrac{d_j}{n-1}\right)\\
    &= \Theta\left(\sqrt{ n}\, \lambda^{|A|} (1-\lambda)^{n-|A|} \right),
	\end{align*}
	uniformly over all choices $i \in [n]$, $A\subset [n]\setminus\{i\}$  with $|A|=d_i$ and all choices of $\dvec$ (with only dependency on the implicit constant in \eqref{deg:ass}).
 
\end{lemma}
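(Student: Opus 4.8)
The plan is to compute $\Pr(N_i(\calG_{\dvec}) = A)$ by counting the number of $d$-regular-type graphs (with degree sequence $\dvec$) in which vertex $i$ has exactly the neighbour set $A$, and dividing by $\calN(n,\dvec)$. First I would observe that fixing $N_i(G) = A$ is the same as fixing all edges incident to $i$; what remains is a graph on the vertex set $[n] \setminus \{i\}$ whose degree sequence $\dvec'$ is obtained from $\dvec$ by deleting the $i$-th entry and decreasing $d_j$ by one for each $j \in A$ (leaving $d_j$ unchanged for $j \notin A \cup \{i\}$). Thus
\[
\Pr\bigl(N_i(\calG_{\dvec}) = A\bigr) = \frac{\calN(n-1,\dvec')}{\calN(n,\dvec)}.
\]
Since $|A| = d_i = d + O(1)$ and $\dvec$ is almost $d$-regular, the reduced sequence $\dvec'$ is almost $d$-regular on $n-1$ vertices and still satisfies \eqref{deg:ass} (with a slightly worse implicit constant), so McKay's formula \eqref{eq:enumeration} applies to both numerator and denominator.

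Next I would substitute \eqref{eq:enumeration} into both and simplify the ratio. The $\sqrt{2}e^{1/4}$ prefactors cancel. The product of binomials contributes
\[
\frac{\prod_{j \neq i} \binom{n-2}{d'_j}}{\prod_{j \in [n]} \binom{n-1}{d_j}}
= \frac{1}{\binom{n-1}{d_i}} \prod_{j \in A} \frac{\binom{n-2}{d_j - 1}}{\binom{n-1}{d_j}} \prod_{j \notin A \cup \{i\}} \frac{\binom{n-2}{d_j}}{\binom{n-1}{d_j}},
\]
and using $\binom{n-2}{d_j-1}/\binom{n-1}{d_j} = d_j/(n-1)$ and $\binom{n-2}{d_j}/\binom{n-1}{d_j} = (n-1-d_j)/(n-1) = 1 - d_j/(n-1)$, this becomes exactly $\binom{n-1}{d_i}^{-1} \prod_{j \in A} \frac{d_j}{n-1} \prod_{j \notin A \cup \{i\}} \bigl(1 - \frac{d_j}{n-1}\bigr)$. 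The entropy-type factor $\bigl(\mu^\mu(1-\mu)^{1-\mu}\bigr)^{\binom{n}{2}}$ (with $\mu = \bar d/(n-1)$) changes between the two sequences both because $n$ changes and because $\bar d$ changes by $O(1)$; I would expand $\log$ of this ratio to show it equals $\bigl(1+o(1)\bigr)\binom{n-1}{d_i} \cdot \mu^{d_i}(1-\mu)^{n-1-d_i} \cdot \sqrt{2\pi\lambda(1-\lambda)n}$ after combining with the binomial $\binom{n-1}{d_i}$ via Stirling; in effect $\binom{n-1}{d_i}^{-1}$ times the leftover entropy factor reconstitutes $\sqrt{2\pi\lambda(1-\lambda)n}\,\prod$ the way the statement claims. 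This is the computational heart of the argument.

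The second displayed line, $\Theta\bigl(\sqrt{n}\,\lambda^{|A|}(1-\lambda)^{n-|A|}\bigr)$, then follows because each $d_j/(n-1) = \lambda + O(1/n)$ and $1 - d_j/(n-1) = 1 - \lambda + O(1/n)$, and there are $|A|$ factors of the first kind and $n - 1 - |A|$ of the second; the accumulated $(1 + O(1/n))^n = \Theta(1)$ error is absorbed into the $\Theta(\cdot)$, using $\lambda(1-\lambda) = \Omega(1)$ to keep everything bounded away from degeneracy. Uniformity over $i$, over $A$, and over admissible $\dvec$ is automatic since every estimate above depends on $\dvec$ only through the implicit constant in \eqref{deg:ass} and through $d_i = |A|$, which is itself controlled by that constant.

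The main obstacle I anticipate is bookkeeping the entropy factor $\bigl(\mu^\mu(1-\mu)^{1-\mu}\bigr)^{\binom{n}{2}}$: since $\binom{n}{2}$ is quadratic in $n$ while $\bar d$ shifts by only $O(1)$, one must check that the first-order term in the expansion of this factor cancels precisely against the change in the binomial products and does not leave a surviving $e^{\Theta(n)}$ or even $e^{\Theta(1)}$ discrepancy. Concretely, one writes $\mu' = \mu + \delta$ with $\delta = O(1/n^2)$ coming from the change in $\bar d$ over the change in $n$, Taylor-expands $f(\mu) := \mu\log\mu + (1-\mu)\log(1-\mu)$, and tracks the $\binom{n}{2}f(\mu') - \binom{n-1}{2}f(\mu)$ difference against $\sum_{j}$ of the per-vertex $\log$ terms; the derivative $f'(\mu) = \log\frac{\mu}{1-\mu}$ is exactly what pairs up with the $\log(d_j/(n-1))$ versus $\log(1 - d_j/(n-1))$ terms to produce the cancellation. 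Once this algebra is done carefully, the rest is routine Stirling estimation.
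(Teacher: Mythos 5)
Your proposal follows essentially the same route as the paper: write $\Pr(N_i(\calG_{\dvec})=A)=\calN(n-1,\dvec')/\calN(n,\dvec)$, apply McKay's formula \eqref{eq:enumeration} to both counts, reduce the binomial products to the per-vertex factors $d_j/(n-1)$ and $1-d_j/(n-1)$, and absorb the shift in the entropy factor (your Taylor expansion of $\mu\log\mu+(1-\mu)\log(1-\mu)$ is exactly the paper's explicit computation of $\bar d'/(n-2)$, producing the factor $\(\bar d/(n-1-\bar d)\)^{\bar d-d_i}$ that cancels against the Stirling estimate of $\binom{n-1}{d_i}$). The plan and the anticipated cancellation are correct, so no substantive difference from the paper's argument.
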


\begin{proof}
  Observe that 
  \[
      \Pr(N_i(\calG_{\dvec}) =A) = \frac{\calN(n-1,\dvec')}{\calN(n,\dvec)},
  \]
  where $\dvec'$ is obtained from $\dvec$ by removing $i$'th component and reducing all components corresponding  to $A$ by one.
  The proof is by applying formula \eqref{eq:enumeration} to both numerator and denominator. 
  To estimate the ratio, we observe 
\[
	\dfrac{\bar d'}{n-2} =  \dfrac{\bar d n - 2d_i}{(n-1)(n-2)}  
	=    \dfrac{\bar d}{n-1}   + \dfrac{  \bar d-d_i}{  \binom{n-1}{2}},
\]
where, with a slight abuse of notation, we let
$\bar d= \bar d(\dvec)$ and $\bar d'=\bar d(\dvec')$. This gives
 \begin{align*}
 	&\left(\left(\dfrac{\bar d'}{n-2}\right)^{\frac{\bar d'}{n-2}} 
  \left(1-\dfrac{\bar d'}{n-2}\right)^{1-\frac{\bar d'}{n-2}}\right)^{\binom{n-1}{2}}
 	\\ 
  &\hspace{2cm}\sim \left( \left(\dfrac{\bar d}{n-1}\right)^{\frac{\bar d}{n-1}}\left(1-\dfrac{\bar d}{n-1}\right)^{1-\frac{\bar d}{n-1}}\right)^{\binom{n-1}{2}} 
 	\left( \dfrac{\bar d}{n-1-\bar d}\right)^{\bar d -d_i}.
 \end{align*}
Note  that assumption \eqref{deg:ass}  implies  $\left( \frac{\bar d}{n-1-\bar d}\right)^{\bar d -d_i} \sim \left(\frac{\lambda}{1-\lambda}\right)^{\bar d -d_i}$.
Using Stirling's approximation, we  estimate
\begin{align*}
	\binom{n-1}{d_i}
 \sim \frac{ (\frac{n-1}{d_i})^{d_i} (\frac{n-1}{n-1-d_i})^{n-1-d_i}}{ \sqrt{2\pi \lambda(1-\lambda) n}} 
	\sim \frac{\lambda^{-d_i} (1-\lambda)^{-n+1 + d_i}}{ \sqrt{2\pi \lambda(1-\lambda) n}}.   
\end{align*}
Combining the asymptotic equivalencies established above and formula \eqref{eq:enumeration}, we get that 
\begin{align*}
 \frac{\calN(n-1,\dvec')}{\calN(n,\dvec)}
 &\sim \frac{\left( \left(\frac{\bar d'}{n-2}\right)^{\frac{\bar d'}{n-2}}\left(1-\frac{\bar d'}{n-2}\right)^{1-\frac{\bar d'}{n-2}}\right)^{\binom{n-1}{2}} \prod_{j\in [n]\setminus\{i\}} \binom{n-2}{d_j'}}
 {\left( \left(\frac{\bar d}{n-1}\right)^{\frac{\bar d}{n-1}}\left(1-\frac{\bar d}{n-1}\right)^{1-\frac{\bar d}{n-1}}\right)^{\binom{n}{2}} \prod_{j\in [n]} \binom{n-1}{d_j}}
 \\
 &\sim  \frac{ 
 \left( \frac{\lambda}{1-\bar\lambda}\right)^{\bar d -d_i}
 \binom{n-1}{d_i}^{-1}}
 {
 \left( \left(\frac{\bar d}{n-1}\right)^{\frac{\bar d}{n-1}}\left(1-\frac{\bar d}{n-1}\right)^{1-\frac{\bar d}{n-1}}\right)^{n-1}
 }\,  
		 \prod_{j \in A} \dfrac{ d_j}{n-1}  \prod_{j \notin A \cup\{i\} } \left(1 -  \dfrac{d_j}{n-1}\right)
 \\
 &\sim\sqrt{2\pi \lambda (1-\lambda) n}  \ 
 	  \prod_{j \in A} \dfrac{ d_j}{n-1}  \prod_{j \notin A \cup\{i\} } \left(1 -  \dfrac{d_j}{n-1}\right)
 \end{align*}
as claimed.
The second claim with  $\Theta(\cdot)$ follows from assumption \eqref{deg:ass}.
 \end{proof}

The following concentration result is a simple consequence  of  \cite[Theorem~5.15]{IM2018}.

\begin{lemma}\label{l:concentration}
   Let $Y$ be a set of vertex pairs  such that
	$
		|Y| =  \Omega(n^2). 
	$ Under assumptions~\eqref{deg:ass}, with probability at least $1- e^{-\omega(\log n)}$,
	 \[
	 	|Y \cap  \calG_{\dvec}| = \left(1+o\left( \dfrac{\log n}{n}\right)\right)  \mathbb{E}|Y \cap  \calG_{\dvec}|
	 	 \sim \lambda |Y|.
	 \]
\end{lemma}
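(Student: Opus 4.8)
The plan is to reduce the concentration statement for $|Y\cap\calG_{\dvec}|$ to the cited concentration inequality \cite[Theorem~5.15]{IM2018}, which controls linear functionals of the edge indicator vector of $\calG_{\dvec}$ around their mean. First I would record the expectation: writing $|Y\cap\calG_{\dvec}| = \sum_{ij\in Y}\mathbf 1[ij\in\calG_{\dvec}]$ and using that, by symmetry and a standard switching/enumeration estimate (or directly Lemma~\ref{l:neighb} summed appropriately), $\Pr(ij\in\calG_{\dvec}) = (1+o(1))\lambda$ uniformly over pairs $ij$, one gets $\E|Y\cap\calG_{\dvec}| \sim \lambda|Y|$, which also pins down the right-hand side of the claimed asymptotic equivalence. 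Since $|Y| = \Omega(n^2)$ and $\lambda(1-\lambda)=\Omega(1)$, this mean is of order $n^2$.

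Next I would invoke \cite[Theorem~5.15]{IM2018} in the form that gives, for the sum of edge indicators over any fixed set $Y$ of pairs, a concentration bound of the type
\[
  \Pr\!\left( \bigl| |Y\cap\calG_{\dvec}| - \E|Y\cap\calG_{\dvec}| \bigr| \ge t \right) \le \exp\!\left( - c\,\frac{t^2}{|Y|} \right)
\]
for $t$ in the relevant range, with $c$ depending only on the implicit constant in \eqref{deg:ass}. (The precise statement in \cite{IM2018} is phrased via a variance-type parameter; here that parameter is $\Theta(|Y|) = \Theta(n^2)$ because the indicators, while not independent, have the near-product behaviour quantified there, and the degree sequence is almost regular.) Choosing the deviation $t = \varepsilon_n\,\frac{\log n}{n}\,\E|Y\cap\calG_{\dvec}|$ with $\varepsilon_n\to\infty$ slowly — so that $t = \Theta\bigl(\varepsilon_n n\log n\bigr)$ — the exponent becomes $-c\,t^2/|Y| = -\Theta\bigl(\varepsilon_n^2 \log^2 n\bigr) = -\omega(\log n)$, which gives exactly the failure probability $e^{-\omega(\log n)}$ claimed, while the event itself says $|Y\cap\calG_{\dvec}| = (1+o(\log n/n))\,\E|Y\cap\calG_{\dvec}|$. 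Combining with $\E|Y\cap\calG_{\dvec}|\sim\lambda|Y|$ finishes the proof.

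The main obstacle is purely bookkeeping: checking that the hypotheses of \cite[Theorem~5.15]{IM2018} apply to $\calG_{\dvec}$ under the almost-regularity assumption \eqref{deg:ass} (rather than strict regularity), and that the variance-type quantity appearing there is genuinely $\Theta(|Y|)$ and not smaller — a degenerate smaller value would only help, but one must rule out that the theorem's bound is vacuous in our parameter range. I do not expect any real difficulty here since $\lambda(1-\lambda)=\Omega(1)$ keeps everything in the ``dense, non-degenerate'' regime where \cite{IM2018} is designed to operate; the only care needed is to track that all implicit constants depend only on the constant hidden in \eqref{deg:ass}, so that the bound is uniform over admissible $\dvec$ and over admissible $Y$ with $|Y|=\Omega(n^2)$.
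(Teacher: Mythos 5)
Your proposal follows essentially the same route as the paper: compute $\E|Y\cap\calG_{\dvec}|\sim\lambda|Y|$ from the fact that every edge probability is asymptotically $\lambda$ (the paper cites \cite[Theorem 2.1]{McKay2011} for this), and then apply \cite[Theorem 5.15]{IM2018} with a deviation of order $o(n\log n)$, whose exponent is $-\omega(\log n)$. The only caveats concern the two items you deferred as bookkeeping, which is where the paper's proof actually does its work: it verifies the $\delta$-tameness hypothesis of \cite[Theorem 5.15]{IM2018} via \cite[Theorem 2.1]{BH2013} with $\alpha=\lambda-\eps$, $\beta=\lambda+\eps$ (so all constants depend only on the implicit constant in \eqref{deg:ass}), and it deals with the fact that the cited inequality concentrates $|Y\cap\calG_{\dvec}|$ around an auxiliary quantity $\E\hat{X}$ rather than around its own mean, transferring back by observing that $\E|Y\cap\calG_{\dvec}|-\E\hat{X}=o(n\log n)$.
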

\begin{proof}
From \cite[Theorem 5.15]{IM2018}, we have that, for some fixed $\check{c}>0$ and every $\gamma\geq 0$,
\[
   \Pr\left(\left||Y \cap  \calG_{\dvec}| -  \E \hat{X} \right|  \leq \gamma |Y|^{1/2} \right)
   \geq 1 - \check{c} \exp\left(-2\gamma\min\{\gamma, n^{1/6} (\log n)^{-3}\}\right),
\]
where $\hat{X}$ is a certain random variable, the origin of which  is not of importance for this argument.  
Since $|Y| = \Omega(n^2)$ taking $\gamma =  \log^{2/3}n$, we get that 
\[
|Y \cap  \calG_{\dvec}| -  \E \hat{X}   = o(n \log n)
 \]
with probability at least $1 - e^{-\omega(\log n)}$. Since $0<|Y \cap  \calG_{\dvec}|<n^2$ always, we get that 
\begin{equation}\label{eq:Y-calG}
\E |Y \cap  \calG_{\dvec}|-  \E \hat{X}   = o(n \log n).
\end{equation}

Note that to apply \cite[Theorem 5.15]{IM2018}, one also need to check that $\dvec$  is $\delta$-tame. To verify that, we use \cite[Theorem 2.1]{BH2013}  which states the following sufficient condition. If 
there are $0<\alpha<\beta<1$ such that 
$(\alpha+\beta)^2 < 4\alpha $ and 
\[
   \alpha(n-1) < d_i < \beta(n-1) \quad \text{ for all $i\in [n]$,}
\]
then $\dvec$ is $\delta$-tame for some $\delta = \delta(\alpha,\beta)>0$ provided $n>n_0(\alpha,\beta)$.
Furthermore, one can take
\begin{align*}
    n_0 &= \max\left\{\frac{\beta}{\alpha(1-\beta)}, \frac{4(\beta-\alpha)}{4\alpha - (\alpha+\beta)^2}\right\} +1,\\
    \delta &= \frac{\epsilon^6}{1+\epsilon^6}, \ \ 
    \text{where } \epsilon = \min\left\{\alpha, \alpha - \frac{(\alpha+\beta)^2}{4}\right\}.
\end{align*}
Under assumptions \eqref{deg:ass}, by  taking $\alpha = \lambda-\eps$ and $\beta=\lambda+\eps$ for sufficiently small $\eps$, we show that $\dvec$ is $\delta$-tame for $n \geq n_0$, where $n_0$ and $\delta$  depend only on the implicit constants in \eqref{deg:ass}. In particular,  $\lambda$ is not required to converge: we only need it is bounded away from 0 and 1. Thus, we get \eqref{eq:Y-calG}.

Finally, using \cite[Theorem 2.1]{McKay2011}, we find that all edge probabilities in $\calG_{\dvec}$ are asymptotically equivalent to $\lambda$, which implies 
\[
\E |Y \cap  \calG_{\dvec}| \sim \lambda |Y|.
\]
The claimed bounds follow.
\end{proof}





\section{Extremal independence for common neighbours}\label{S:mainBig}
 
In this section, we estimate the convergence rates for the extremal independence property for the vector of numbers of common neighbours  in random regular graph $\calG_{n,d}$ and, as a consequence, establish Theorem~\ref{T:main}.

Recall that $X_{ij}(n,d) = \left|N_i(\calG_{n,d}) \cap N_j(\calG_{n,d})\right|.$
We consider the joint distribution function $F:\Reals^2 \rightarrow [0,1]$  of the variables $X_{\max}(n,d)$  and  $-X_{\min}(n,d) $  from \eqref{def:max-min} after appropriate scaling: 
\[
        F_{n,d}(x,x'):= \Pr\left(X_{\max}(n,d) \leq a_{n,d} + b_{n,d}\, x \text{ and } X_{\min}(n,d) \geq 2\lambda^2 n -a_{n,d} + b_{n,d}\, x' \right),
\]
where $a_{n,d}$ and $b_{n,d}$ are defined in \eqref{eq:a_n-b_n}.
We show that   $F_{n,d}(x,x')$ can be approximated by the corresponding distribution function $\hat{F}: \Reals^2 \rightarrow [0,1]$ for independent copies of $X_{ij}(n,d)$, which can be defined by
 \begin{align*}
    \hat{F}_{n,d}(x,x') := \prod_{ij \in \binom{[n]}{2}} \Pr\left( 2\lambda^2 n -a_{n,d} + b_{n,d}\, x'  \leq X_{ij}(n,d) \leq a_{n,d} + b_{n,d}\, x \right).
 \end{align*}
\begin{thm}\label{T:convergence}
Let \eqref{ass-main} hold. Then, for any fixed $x,x' \in \Reals$, we have
\[
      F_{n,d}(x,x') -  \hat{F}_{n,d}(x,x')   = o\left(\dfrac{\log^2 n}{n^{1/2}}\right).
\]
Furthermore, the marginal distributions   
$F_{n,d}(x,-\infty)$
and $F_{n,d}(\infty,x')$ satisfy the same bound.
\end{thm}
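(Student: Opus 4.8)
The plan is to apply Theorem~\ref{T:extremal} with a carefully chosen dependency graph $\D$ on the vertex set $\binom{[n]}{2}$, treating each pair $ij$ as an index and the event $A_{ij} := \{X_{ij}(n,d) > a_{n,d} + b_{n,d}x\} \cup \{X_{ij}(n,d) < 2\lambda^2 n - a_{n,d} + b_{n,d}x'\}$ (with the one-sided versions for the marginal claims). First I would define $\D$ by declaring two pairs $ij$ and $kl$ strongly dependent precisely when they share a vertex, i.e.\ $\{i,j\}\cap\{k,l\}\neq\emptyset$; this is the natural choice since $X_{ij}$ and $X_{kl}$ for disjoint pairs should be only weakly dependent, whereas pairs through a common vertex genuinely interact. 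Each closed neighbourhood $D_{ij}$ in $\D$ then has size $\Theta(n)$, so the total number of ordered strongly-dependent pairs is $\Theta(n^3)$, out of $\binom{\binom{n}{2}}{2} = \Theta(n^4)$.

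The three quantities to bound are $\varphi(\A,\D)$, $\Delta_1(\A,\D)$, and $\Delta_2(\A,\D)$. For $\Delta_1$ and $\Delta_2$: by Corollary~\ref{L:dist}(a), each $\Pr(A_{ij})$ is comparable to a binomial tail at the threshold $a_{n,d}+b_{n,d}x$, which by the choice of $a_{n,d},b_{n,d}$ (matching the Nadarajah--Mitov centering/scaling of Theorem~\ref{T:bin} with $N=\lfloor\tfrac{\lambda}{2-\lambda}n\rfloor$, $p=\lambda(2-\lambda)$, $m=\binom{n}{2}$) is of order $\Theta(1/m) = \Theta(n^{-2})$; similarly for the lower tail. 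Hence $\Delta_2 = \sum_{ij}\sum_{kl\in D_{ij}}\Pr(A_{ij})\Pr(A_{kl}) = \Theta(n^3)\cdot O(n^{-2})\cdot O(n^{-2}) = O(n^{-1}) = o(\log^2 n / \sqrt n)$. For $\Delta_1$ I need the joint probability $\Pr(A_{ij}\cap A_{kl})$ for pairs sharing a vertex; here I would condition on the neighbourhood of the shared vertex (as in the proof of Theorem~\ref{T:distribution_pair}) and argue that, given $N_i(\calG_{n,d})$, the two events become conditionally near-independent binomial-type tails, each of order $O(n^{-2})$ up to constants, so $\Pr(A_{ij}\cap A_{kl}) = O(n^{-4+o(1)})$ and $\Delta_1 = \Theta(n^3)\cdot O(n^{-4+o(1)}) = o(\log^2 n/\sqrt n)$. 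Some care is needed because the tail events are at the very edge of the binomial support, so the conditional-independence heuristic must be made quantitative using the local limit estimates.

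The genuinely hard step is bounding $\varphi(\A,\D)$, i.e.\ showing that conditioning on a single event $A_{ij}$ — which forces $X_{ij}$ to be unusually large (or small) — changes the probability of $\bigcup_{kl} A_{kl}$ over far-away pairs by only $o(\log^2 n/\sqrt n)$. This is exactly where Theorem~\ref{t:phi} enters: conditioning on $X_{ij}(n,d) = h$ with $h$ in the relevant window $h = (1+O(\log n/\sqrt n))\lambda^2 n$ (which by Corollary~\ref{L:dist}(b) covers the whole event $A_{ij}$ up to a negligible $e^{-\omega(\log n)}$ contribution) can be coupled with the unconditioned $\calG_{n,d}$ so that all but two vertices have neighbourhoods changing by at most $8$ elements, except on a bad event of probability $o(\log^2 n/\sqrt n)$. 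On the good event, changing $8$ entries of a neighbourhood changes each $X_{kl}$ by $O(1)$, but since $A_{kl}$ is an event about whether $X_{kl}$ crosses a sharp threshold, I must also rule out that $X_{kl}$ sits exactly at the boundary; this follows because the binomial pmf at the threshold is $O(n^{-2})$ per pair, so the probability that \emph{some} pair $kl$ lands within $O(1)$ of its threshold is $O(n^2)\cdot O(1)\cdot O(n^{-2}) = O(1)$ — which is not good enough directly, so instead I would use the anti-concentration only after summing against the coupling: on the good event the symmetric difference of the two events $\{\bigcup A_{kl}\}$ evaluated on $\calG_{n,d}$ versus $\calG_{n,d}^h$ is contained in the event that one of the two modified vertices' pairs changes status, contributing $O(n)\cdot O(n^{-2}) = O(n^{-1})$, plus the coupling failure probability $o(\log^2 n/\sqrt n)$. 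Averaging this bound over $h$ against $\Pr(X_{ij} = h \mid A_{ij})$ yields $\varphi(\A,\D) = o(\log^2 n/\sqrt n)$. Finally, plugging $\varphi, \Delta_1, \Delta_2 = o(\log^2 n /\sqrt n)$ and $1 - \prod \Pr(\overline{A_{ij}}) = O(1)$ into \eqref{eq:Tmain} gives the claimed bound; the marginal statements follow by the identical argument with the one-sided events.
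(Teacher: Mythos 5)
Your overall architecture (Theorem~\ref{T:extremal} with the dependency graph of overlapping pairs, the binomial approximation of Corollary~\ref{L:dist}, and the coupling of Theorem~\ref{t:phi} for the mixing coefficient) is exactly the paper's, and your $\Delta_2$ bound and the conditioning-on-the-shared-neighbourhood idea for $\Delta_1$ match the paper's Lemma~\ref{L:eps-gamma} in spirit. However, your treatment of $\varphi$ contains a genuine gap. First, the quantity you need is not the tail probability but the binomial point mass near the threshold $a_{n,d}+b_{n,d}x$: since this threshold sits about $2\sqrt{\log n}$ standard deviations above the mean, Corollary~\ref{L:dist}(a) gives $\Pr(X_{kl}=s)=O(n^{-5/2+o(1)})$ for $s$ within $O(1)$ of the threshold, not the $O(n^{-2})$ you quote (that is the whole tail, i.e.\ the sum of the pmf beyond the threshold). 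With the correct local estimate, the union bound you dismissed does work: on the good coupling event each $X_{kl}$ with $kl$ avoiding the conditioned pair moves by at most $16$, so the symmetric difference of the two union events is contained in the event that some $X_{kl}$ lies within $16$ of a threshold, and summing $O(n^{-5/2+o(1)})$ over the $O(n^2)$ relevant pairs gives $O(n^{-1/2+o(1)})=o(\log^2 n/\sqrt n)$. This is precisely the paper's argument.

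Second, your proposed workaround is incorrect and cannot replace that computation. Theorem~\ref{t:phi} does \emph{not} say that only ``two modified vertices'' have altered neighbourhoods: on the good event, \emph{every} vertex $k\notin\{i,j\}$ may have its neighbourhood changed by up to $8$ elements, so every pair $kl$ in the union (all of which avoid $\{i,j\}$ by the choice of $\D$, since neighbours in $\D$ are excluded in the definition of $\varphi$) can change status; the pairs meeting $\{i,j\}$, to which your $O(n)$ count would refer, are not even present in the union. Hence the bound ``$O(n)\cdot O(n^{-2})=O(n^{-1})$'' has no justification, and as written your estimate of $\varphi$ fails. A further, more minor point: to make both the $\varphi$ step and the $\Delta_1$ step rigorous you should truncate the events $A_{ij}$ to the window $|X_{ij}-\lambda^2 n|\le\sqrt n\log n$ at the outset (as the paper does, at cost $e^{-\omega(\log n)}$ by Corollary~\ref{L:dist}(b)), since Theorem~\ref{t:phi} only applies to $h$ in that window and the comparison inequality behind $\Delta_1=O(\Delta_2)$ is only available for values of $X_{ij}$ in a window around $\lambda^2 n$; your averaging-over-$h$ remark gestures at this but the truncated events are what make the boundary set consist of finitely many $O(1)$-radius balls.
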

Our plan for this section is as follows. First, estimating  $ \hat{F}_{n,d}(x,x')$, we 
derive Theorem~\ref{T:main} from  Theorem \ref{T:convergence}. Then, combining  Theorem \ref{t:phi} and Theorem~\ref{T:extremal}, we  prove 
Theorem \ref{T:convergence}. 

\subsection{Proof of Theorem \ref{T:main}}\label{S:main}

 From Theorem \ref{T:convergence}, we get that, for any fixed $x,x'\in \Reals$
\begin{align*}
   \Pr\Big(&\dfrac{X_{\max}(n,d) -a_{n,d}}{b_{n,d}} \leq x,\dfrac{2\lambda^2n-a_{n,d}-X_{\min}(n,d) }{b_{n,d}} \leq -x'\Big) 
   \\ &=    F_{n,d}(x,x') = \hat{F}_{n,d}(x,x') + o(1).
\end{align*}
Using the lemma  below, it is straightforward to show that 
\[
\hat{F}_{n,d}(x,x') = \Big(\Pr\left( 2\lambda^2 n -a_{n,d} + b_{n,d}\, x'  \leq X_{ij}(n,d) \leq a_{n,d} + b_{n,d}\,x \right)\Big)^{\binom{n}{2}} 
\longrightarrow e^{-e^{-x} - e^{x'}},
\]
thus completing the proof of Theorem \ref{T:main}.

\begin{lemma}
Let \eqref{ass-main} hold. Then, for any fixed $x,x' \in \Reals$, we have
\begin{align*}
    &\Pr\Big(X_{ij}(n,d) >  a_{n,d} + b_{n,d} x\Big)  \sim
    \dfrac{1}{\binom{n}{2}} e^{-x},\\
   &\Pr\Big(X_{ij}(n,d) < 2\lambda^2 n -a_{n,d} + b_{n,d}\, x' \Big)  \sim
    \dfrac{1}{\binom{n}{2}} e^{x'}.
\end{align*}
\label{cl:prob_single_event}
\end{lemma}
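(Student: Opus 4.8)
\textbf{Proof plan for Lemma~\ref{cl:prob_single_event}.}
The plan is to compute $\Pr(X_{ij}(n,d) = h)$ for $h$ in the relevant window around $\lambda^2 n$ using Corollary~\ref{L:dist}(a), which tells us that this probability is asymptotically $\Pr(\xi = h)$ with $\xi \sim \Bin(N,p)$ for $N = \lfloor \tfrac{\lambda}{2-\lambda} n\rfloor$ and $p = \lambda(2-\lambda)$, and then to sum over $h > a_{n,d} + b_{n,d} x$ (respectively $h < 2\lambda^2 n - a_{n,d} + b_{n,d} x'$). First I would note that $pN \sim \lambda^2 n$ and $Np(1-p) \sim \lambda^2(1-\lambda)^2 n$ (using $p(1-p) = \lambda(2-\lambda)(1-\lambda)^2$ and $N \sim \tfrac{\lambda}{2-\lambda}n$), so that the mean and variance of $\xi$ match the Gaussian scaling implicit in the definitions of $a_{n,d}$ and $b_{n,d}$; indeed a direct check should show $a_{n,d} = a_n^*(N, \binom{n}{2}, p)$ and $b_{n,d} = b_n^*(N,\binom{n}{2},p)$ up to the additive/multiplicative $o(1)$ errors that are irrelevant after scaling, where $a_n^*, b_n^*$ are from Theorem~\ref{T:bin}. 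This is exactly the identification that makes the constants in \eqref{eq:a_n-b_n} what they are.

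Second, I would establish the tail asymptotics for the binomial directly. Since $a_{n,d} + b_{n,d} x = \lambda^2 n + \Theta(\sqrt{n\log n})$, the threshold sits at roughly $\sqrt{2\log m}$ standard deviations above the mean with $m = \binom{n}{2}$, which is well within the range where a local central limit theorem for $\Bin(N,p)$ applies with multiplicative error $1+o(1)$ uniformly (this is standard; alternatively one can invoke the relevant estimate from the proof of Theorem~\ref{T:bin} in \cite{nadarajah2002}). Concretely, writing $t = a_{n,d} + b_{n,d} x$, one has
\[
\Pr(\xi > t) = \sum_{h > t} \Pr(\xi = h) \sim \sum_{h>t} \frac{1}{\sqrt{2\pi Np(1-p)}} \exp\!\left(-\frac{(h-pN)^2}{2Np(1-p)}\right),
\]
and approximating the sum by the Gaussian integral $\int_t^\infty \tfrac{1}{\sqrt{2\pi\sigma^2}} e^{-(u-\mu)^2/2\sigma^2}\,du$ with $\mu = pN$, $\sigma^2 = Np(1-p)$, then using the Mills-ratio asymptotic $\int_z^\infty e^{-u^2/2}\,du \sim z^{-1} e^{-z^2/2}$ for the standardized variable $z = (t-\mu)/\sigma$, gives after plugging in the precise form of $a_{n,d}, b_{n,d}$ exactly $\Pr(\xi > t) \sim \binom{n}{2}^{-1} e^{-x}$. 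The bookkeeping here is just the elementary computation that $z = \sqrt{2\log m}\,(1 + o(1))$ with the correction terms in $a_{n,d}$ designed precisely so that $\tfrac{1}{z}e^{-z^2/2} \sim \tfrac{1}{m} e^{-x}$; the $-\tfrac{\log\log m}{4\log m}$ and $-\tfrac{\log(2\sqrt\pi)}{2\log m}$ terms account for the $z^{-1}$ prefactor and the $\sqrt{2\pi}$ normalization respectively.

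Third, the lower-tail statement is entirely symmetric: $2\lambda^2 n - a_{n,d} + b_{n,d} x' = \lambda^2 n - (a_{n,d} - \lambda^2 n) + b_{n,d} x'$ sits the same distance $\Theta(\sqrt{n\log n})$ below the mean $pN \sim \lambda^2 n$, so the same Gaussian tail computation with the sign of the deviation flipped yields $\Pr(\xi < 2\lambda^2 n - a_{n,d} + b_{n,d} x') \sim \binom{n}{2}^{-1} e^{x'}$. Both thresholds lie in the window $h = (1 + O(\tfrac{\log n}{\sqrt n}))\lambda^2 n$ required by Corollary~\ref{L:dist}(a), so the passage from $X_{ij}(n,d)$ to $\xi$ is legitimate term-by-term; uniformity of the $\sim$ in Corollary~\ref{L:dist}(a) over that window lets us sum it. I expect the main (though still routine) obstacle to be the uniformity: one must check that $\Pr(X_{ij} = h) \sim \Pr(\xi = h)$ holds with an error that is $o(1)$ \emph{after summing} $\Theta(\sqrt{n\log n})$ many terms — this is fine because the relative error is uniform, so it factors out of the sum, but it needs to be stated carefully. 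A secondary nuisance is verifying that replacing the sum by an integral and using the Mills ratio introduces only a $1+o(1)$ factor, which is standard since the summand is slowly varying on the scale $\sigma = \Theta(\sqrt{n/\log n})$ relative to the window width. Everything else is the arithmetic of matching $(a_{n,d},b_{n,d})$ to $(a_n^*,b_n^*)$ from Theorem~\ref{T:bin}.
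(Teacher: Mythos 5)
Your reduction to the binomial is the same as the paper's: both arguments pass from $X_{ij}(n,d)$ to $\xi\sim\Bin(N,p)$ via Corollary~\ref{L:dist} and then check that $(a_{n,d},b_{n,d})$ from \eqref{eq:a_n-b_n} agree with $(a_n^*,b_n^*)$ from \eqref{def:ab} for $m=\binom{n}{2}$, in the sense $a_{n,d}=a_n^*+o(b_{n,d})$ and $b_{n,d}\sim b_n^*$ (your ``direct check'' is exactly this; note the required additive accuracy is $o(b_{n,d})$, not $o(1)$). Where you genuinely diverge is the binomial tail itself. The paper does no Gaussian analysis at this point: it invokes Theorem~\ref{T:bin} for $m$ independent copies, reads off $\bigl(1-\Pr(\xi>a_n^*+b_n^*x)\bigr)^m\to e^{-e^{-x}}$, and inverts this to get $\Pr(\xi>a_n^*+b_n^*x)\sim m^{-1}e^{-x}$; the lower tail is handled by the symmetry $a_n^*(N,m,1-p)=a_n^*(N,m,p)+(1-2p)N$ applied to $N-\xi\sim\Bin(N,1-p)$. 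You instead rederive the tail from scratch via a local CLT, a sum-to-integral comparison and the Mills ratio. That route is sound in this regime (the threshold sits $\Theta(\sqrt{\log n})$ standard deviations from the mean, i.e.\ deviations of order $\sqrt{n\log n}$, well inside the range where the de Moivre--Laplace approximation has uniform relative error $1+o(1)$), and it is more self-contained since it never needs the Gumbel statement; the price is precisely the constant-matching bookkeeping you describe, which the paper's inversion trick sidesteps.

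One step you should make explicit: the tail $\Pr\bigl(X_{ij}(n,d)>a_{n,d}+b_{n,d}x\bigr)=\sum_{h>a_{n,d}+b_{n,d}x}\Pr\bigl(X_{ij}(n,d)=h\bigr)$ runs up to $h=d$, far outside the window $h=\bigl(1+O(\log n/\sqrt n)\bigr)\lambda^2 n$ in which Corollary~\ref{L:dist}(a) applies, so the term-by-term comparison with $\xi$ cannot cover the whole sum. You must truncate: Corollary~\ref{L:dist}(b) bounds $\Pr\bigl(|X_{ij}(n,d)-\lambda^2 n|>\sqrt n\log n\bigr)$ by $e^{-\omega(\log n)}$, and a Chernoff bound does the same for $\xi$; both contributions are negligible against the target $m^{-1}e^{-x}=\Theta(n^{-2})$. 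This is exactly how the paper assembles \eqref{eq:part_a}, and the same truncation is needed for the lower tail. As written, your statement that ``the passage from $X_{ij}$ to $\xi$ is legitimate term-by-term'' elides this; with the truncation added, your argument is complete.
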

\begin{proof}
Set 
   \[
 	  			N := \left\lfloor \frac{\lambda}{2-\lambda}n \right\rfloor, \qquad p:=\lambda(2-\lambda), \qquad m:=\binom{n}{2}.
 	  \]     
Let  $\xi$ be  distributed according to $\Bin(N,p)$.
Using Theorem \ref{T:bin}, we get that
 \[
 		\left(1 - \Pr\left(  \xi>  a_n^*  + b_n^*  x \right)\right)^m  
 		 \rightarrow e^{-e^{-x}},
 \]
where $a_n^* =a_n^*(N,m,p)$, $b_n^*=b_n^*(N,m,p)$ are defined in~\eqref{def:ab}. Thus,
\[
 		\Pr\left(  \xi>  a_n^*  + b_n^*  x \right) \sim  \dfrac{1}{m} e^{-x}.
\]
From definition \eqref{def:ab}, 
we find that 
\[
a_{n,d}   =  a_n^* +o(b_n) \quad \text{ and } \quad
b_{n,d}  \sim b_n^* \rightarrow \infty.
\] Therefore,
$$
\Pr\left(  \xi>  a_{n,d}  + b_{n,d}  x \right) \sim  \dfrac{1}{m} e^{-x}.
$$
Applying the Chernoff bound, we get that with probability at least $1 - e^{-\omega(\log n)}$, 
\begin{equation}\label{eq:Chernoff}
 | \xi-\lambda^2 n |   \leq \sqrt{n} \log n.
\end{equation}
 Using also Corollary~\ref{L:dist}, we find that 
 \begin{equation}\label{eq:part_a}
 		 \Pr\left( X_{ij}(n,d) >  a_{n,d} + b_{n,d} \, x\right)
 		 	=(1+o(1)) \Pr\left(  \xi>  a_{n,d}  + b_{n,d}\,  x \right)  +   e^{-\omega(\log n)} 
 		 	\sim   \dfrac{1}{m} e^{-x}
 \end{equation}
 completing the proof of the first claimed bound.

The proof of the second bound is similar by applying Theorem \ref{T:bin} for $N-\xi\sim \Bin(N,1-p)$.
Additionally, we only need to observe the symmetry relation
\[
		a_n^*(N,m,1-p) =  a_n^*(N,m,p) + (1-2p)N,\qquad  b_n^*(N,m,1-p) = b_n^*(N,m,p),
\]
which  gives
\[
	\Pr\left(  \xi < 2 \lambda^2n  - a_{n,d}  + b_{n,d}\,   x'\right) 
	\sim \Pr \Big( N-\xi  > a_n^*(N,1-p)  -   b_n^*(N,1-p) x'  \Big)  \sim \dfrac{1}{m} e^{x'}.
\]
Using Corollary~\ref{L:dist}, we derive the second bound.
\end{proof}

\subsection{Proof of Theorem \ref{T:convergence}}
For $ij \in \binom{[n]}{{2}}$, consider the events $A_{ij}$  defined by
\begin{align*}
& A_{ij}:=\{X_{ij}(n,d) \in I^+(x) \cup I^-(x') \},\\ 
         &I^+(x):=
         (a_{n,d} + b_{n,d}\, x, \lambda^2 n + \sqrt{n} \log n),
         \\
            &I^-(x'):= (\lambda^2 n - \sqrt{n} \log n, 2\lambda^2 n  -a_{n,d} + b_{n,d}\, x').
            \end{align*}
Using Corollary \ref{L:dist}(b), we get that 
\begin{equation}\label{eq:F-F}
F_{n,d}(x,x') -  \hat{F}_{n,d}(x,x')=\Pr\left(\bigcap_{ ij \in \binom{[n]}{2}}\overline{A_{ij}}\right)  -  \prod_{  ij \in \binom{[n]}{2}}  \Pr\left( \overline{A_{ij}}\right) +  e^{-\omega(\log n)}.
\end{equation}
To estimate the RHS of \eqref{eq:F-F}, we apply Theorem~\ref{T:extremal} for   
$
\A := (A_{ij})_{ij \in \binom{[n]}{2}}.
$
Define the graph $\D$ on ${[n]\choose 2}$ in the following way: for a pair of distinct vertices $i,j \in [n]$, the set $D_{ij}$  consists of pairs that overlap with $ij$, but not coincide. That is, we have
\[
D_{ij} =\left\{i'j' \in \binom{[n]}{2} \st    |\{i,j\} \cap \{i',j'\}| = 1 \right\}.
 \]
Let
$$
\varphi=\varphi(\A,\D),\quad \Delta_1=\Delta_1(\A,\D),\quad
\Delta_2=\Delta_2(\A,\D).
$$

Let $h $ be any integer from  $I^- \cup I^+$, 
that is, $\{X_{12}(n,d)=h\} \subset A_{12}$ and 
let  $U \subseteq \binom{[n] \setminus\{1,2\}}{2}$. To bound $\varphi$, we  show that 
\begin{equation}\label{eq:forphi}
\Pr\left(\bigcup_{ij \in U   } A_{ij}   \mid  X_{12}(n,d)=h\right)
- \Pr\left(\bigcup_{ij \in U     } A_{ij}   \right)   = o\left(\frac{\log^2 n}{n^{1/2}}\right)
\end{equation}
 uniformly over such $U$ and $h$.   
 Consider the coupling $(\calG_{n,d}, \calG_{n,d}^h)$ provided by Theorem \ref{t:phi}.
 Since, with probability $1 -o\left(\dfrac{\log^2 n}{\sqrt{n}}\right)$,
  		the neighbourhoods of each  vertex $k \in [n]\setminus\{1,2\}$ in graphs $\calG_{n,d}$ and  $\calG_{n,d}^h$ differ by at most $8$ elements,  using the union bound,
    we get that  
    \begin{align*}
        \Big|\Pr\Big( \bigcup_{ij \in U   } A_{ij} &\mid  X_{12}(n,d)=h \Big) -  
        \Pr\Big(\bigcup_{ij \in U   } 
 A_{ij}\Big)\Big| 
 \\ &\leq  
         \sum_{ij\in U}\Pr\Big(X_{ij}(n,d) \in I_1 \cup I_2 \cup I_3 \cup I_4\Big) +o\left(\dfrac{\log^2 n}{\sqrt{n}}\right),
        \end{align*}
 where $I_1, I_2, I_3, I_4$ are the balls of radius  16 around the endpoints of $I^+(x)$ and $I^-(x)$. Using Corollary \ref{L:dist}(a), we get that
 \[
    \Pr(X_{ij}(n,d) \in I_1 \cup I_2 \cup I_3 \cup I_4) = O\left( \max_{s} \,\Pr(\xi =s)\right) = O(n^{-5/2}),
 \]
where $\xi \in \Bin(N,p)$ with $N =  \left\lfloor \dfrac{\lambda}{2-\lambda}n \right\rfloor$ and  $p=\lambda(2-\lambda) $. The claimed bound \eqref{eq:forphi} follows. Considering similar bounds for all other pairs of vertices instead of $1$  and $2$, we show   $\varphi = o\left(\dfrac{\log^2 n}{\sqrt{n}}\right)$.

Next, we estimate $\Delta_1$ and $\Delta_2$. 
From Lemma~\ref{cl:prob_single_event}, we know that 
$\Pr(A_{i,j})=O(n^{-2})$.   Since the number of edges in $\D$ is $O(n^3)$, we get that $\Delta_2=O(n^{-1})$.
From Lemma \ref{L:eps-gamma} stated below it follows that 
\begin{equation*}
  \Delta_1 = O(\Delta_2)=O(n^{-1}).
\end{equation*}
Then, applying Theorem \ref{T:extremal}, we get  Theorem \ref{T:convergence}.

\begin{remark}
The proof of the bounds for marginal distributions 
$F_{n,d}(x,\infty)$
and $F_{n,d}(\infty,x')$ 
follows exactly the same steps with the only modification: we ignore  $I^-(x')$ or $I^+(x')$ in the definition of the events $A_{ij}$. 
\end{remark}

\begin{lemma}\label{L:eps-gamma}
 Let $\varepsilon  \in \left(0, \frac{1}{2}\min\{\lambda^2,(1-\lambda)^2\}\right) $ be fixed. There exists a constant $\gamma>0$ such that
\[
\Pr\Big(X_{ij}(n,d)\in Y \text{ and } X_{ij'}(n,d)\in Y'\Big)\leq\gamma\,\Pr(X_{ij}(n,d)\in Y)\cdot\Pr(X_{i,j'}(n,d)\in Y').
\]
  for all distinct $i,j,j'\in[n]$ and any integer   sets
  $Y,Y'\subset [(\lambda^2 -\eps) n, (\lambda^2+\varepsilon)n]$. 
\label{cl:Delta1_from_Delta_2}
\end{lemma}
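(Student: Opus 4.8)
The plan is to reduce the joint probability to a single-vertex-exposure computation and then compare it termwise with the product of the marginals. Concretely, I would first expose the neighbourhood $A := N_i(\calG_{n,d})$ of the common vertex $i$. Conditioned on $N_i(\calG_{n,d}) = A$, the graph $\calG_{n,d} - i$ (with vertex $i$ and its incident edges deleted) is distributed as a uniform random graph $\calG_{\dvec}$ on $[n]\setminus\{i\}$ with the almost $d$-regular degree sequence $\dvec$ obtained by subtracting $1$ from the degree of every vertex in $A$; this degree sequence satisfies \eqref{deg:ass} with the same implicit constants. Now, crucially, $X_{ij}(n,d) = |N_j(\calG_{n,d}) \cap A|$ and $X_{ij'}(n,d) = |N_{j'}(\calG_{n,d})\cap A|$ depend on $\calG_{n,d}$ only through $\calG_{\dvec}$ (and the already-exposed set $A$). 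So after this conditioning it suffices to bound, uniformly over admissible $A$,
\[
\Pr\Big( |N_j(\calG_{\dvec})\cap A| \in Y \text{ and } |N_{j'}(\calG_{\dvec})\cap A| \in Y'\Big)
\le \gamma\,\Pr\Big(|N_j(\calG_{\dvec})\cap A|\in Y\Big)\cdot\Pr\Big(|N_{j'}(\calG_{\dvec})\cap A|\in Y'\Big),
\]
and then integrate over $A$ (the inequality is preserved because the marginals of $X_{ij}$ and $X_{ij'}$ are, up to the conditioning, recovered by the same integration, and one can absorb the mismatch into the constant using that for \emph{every} admissible $A$ the conditional marginals are $\Theta$ of the unconditional ones — this last point needs a short argument via Theorem \ref{T:distribution_pair}(a), which says the conditional distribution given $ij\notin\calG_{n,d}$, i.e. essentially given $A$, has the stated shape uniformly in $h$).

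For the core two-vertex bound, I would expose $N_j(\calG_{\dvec})$ as well. Given $N_i(\calG_{n,d}) = A$ and $N_j(\calG_{\dvec}) = B$ with $|B\cap A| = h \in Y$, the remaining graph $\calG_{\dvec} - j$ is again uniform with an almost $d$-regular degree sequence, and $X_{ij'}(n,d) = |N_{j'}\cap A|$ is a function of it. By Lemma \ref{l:neighb} (applied to this twice-reduced degree sequence), for any fixed target value $h'\in Y'$ the conditional probability $\Pr(|N_{j'}\cap A| = h' \mid A, B)$ is, uniformly over all choices of $A$ and $B$ with $|A|,|B|$ and $|A\cap B|$ in the prescribed windows, within a bounded factor of the analogous unconditional probability $\Pr(|N_{j'}(\calG_{\dvec'})\cap A| = h')$ where $\dvec'$ only depends on $A$. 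The point is that removing $j$ from the degree sequence perturbs each entry by $O(1)$, and Lemma \ref{l:neighb} gives the neighbourhood distribution as a product $\prod_{k\in A'} \frac{d_k}{n-1}\prod_{k\notin A'}(1-\frac{d_k}{n-1})$ times a $\Theta(\sqrt n)$ factor, which changes only by a bounded multiplicative factor under an $O(1)$ perturbation of finitely-affected coordinates and a $\pm 1$ shift of $|A'| = d_{j'}$ or $d_{j'}-1$. Summing over $h'\in Y'$ gives the desired factorisation up to a constant, and then summing over $h\in Y$ (weighted by $\Pr(|N_j\cap A| = h\mid A)$) closes the argument.

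The main obstacle is bookkeeping the conditioning cleanly: one must make sure that after exposing $A$ and then $B$, the object whose neighbourhood controls $X_{ij'}$ is still a uniform random graph with an almost $d$-regular degree sequence satisfying \eqref{deg:ass} with constants independent of the exposed sets, and that the restriction $Y,Y'\subset[(\lambda^2-\eps)n,(\lambda^2+\eps)n]$ (hence $h,h' = \Theta(n)$, and $|A\cap B|,|A\cap N_{j'}|$ comparable to $\lambda^2 n$) is exactly what is needed for the ratios of the product-formula weights from Lemma \ref{l:neighb} to stay bounded — an $O(1)$ additive shift in an exponent like $\bar d - d_{j'}$ is harmless, but one must check no exponent of size $\Theta(n)$ is disturbed. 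The choice $\eps < \tfrac12\min\{\lambda^2,(1-\lambda)^2\}$ guarantees the relevant intersection sizes stay bounded away from $0$ and from their extreme values, so all the probabilities in play are genuinely $\Theta$ of each other and the constant $\gamma$ depends only on $\lambda(1-\lambda) = \Omega(1)$ and on $\eps$. Everything else is a routine application of Lemma \ref{l:neighb} and Stirling.
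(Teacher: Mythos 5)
Your proposal follows essentially the same route as the paper: expose the neighbourhoods of $i$ and $j$, use Lemma~\ref{l:neighb} to show that the conditional distribution of $N_{j'}$ given both exposed sets is within a constant factor of the one given only $N_i$, handle the ``$j'$ adjacent or not'' discrepancy via the $\eps$-window, and sum over the admissible configurations. The only cosmetic difference is that you recover the unconditional marginals by invoking Theorem~\ref{T:distribution_pair}(a)--(b), whereas the paper does this internally via the two values $P_3^{in},P_3^{out}$ and the bound $\Pr(X_{13}(n,d)=y')\geq \gamma_2\max\{P_3^{in},P_3^{out}\}$; both are sound.
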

\begin{proof} 
Without loss of generality, we let $i=1$, $j=2$, $j'=3$. Clearly, it is sufficient to prove the lemma for singleton sets $Y=\{y\}$ and $Y'=\{y'\}$, where $y,y'\in[(\lambda^2-\varepsilon)n,(\lambda^2+\varepsilon)n]$. Let us fix three {\it consistent} sets $U_1\subset[n]\setminus\{1\}$, $U_2\subset[n]\setminus\{2\}$, $U_3\subset[n]\setminus\{3\}$ of size $d$. By consistency we mean that $2\in U_1$ if and only if $1\in U_2$, and the same holds true for other pairs of sets. In other words, the event $\{N_1(\calG_{n,d})=U_1,\,N_2(\calG_{n,d})=U_2,\,N_3(\calG_{n,d})=U_3\}$ has positive probability. Note that, subject to $\{N_1(\calG_{n,d})=U_1,\,N_2(\calG_{n,d})=U_2\}$, the random graph $\calG_{n,d}$ has uniform distribution over the set of all $d$-regular graphs on $[n]$ satisfying the condition. Since the neighbourhoods of vertices $1,2$ of this graph are determined, it can be treated as a uniform random graph on $\{3,\ldots,n\}$ with almost $d$-regular degree sequence. By Lemma~\ref{l:neighb},
\begin{equation}
 \Pr(N_3(\calG_{n,d})=U_3\mid N_1(\calG_{n,d})=U_1,\,N_2(\calG_{n,d})=U_2)=\Theta\left(\sqrt{n}\lambda^d(1-\lambda)^{n-d}\right)
   \label{eq:cond_two_sets}
\end{equation}
uniformly over the choice of $U_1,U_2,U_3$. In the same way,
\begin{equation}
 \Pr(N_3(\calG_{n,d})=U_3\mid N_1(\calG_{n,d})=U_1)=\Theta\left(\sqrt{n}\lambda^d(1-\lambda)^{n-d}\right)
   \label{eq:cond_one_set}
\end{equation}
uniformly over the choice of $U_1,U_3$.
Due to \eqref{eq:cond_two_sets},~\eqref{eq:cond_one_set},   for some sufficiently large constant $\gamma_1>0$ (independent of $U_1,U_2,U_3$),
\begin{equation}
\begin{aligned}
  \Pr\(N_3(\calG_{n,d})=U_3&\mid N_1(\calG_{n,d})=U_1,\,N_2(\calG_{n,d})=U_2\) \\ &\leq \gamma_1
  \Pr\(N_3(\calG_{n,d})=U_3\mid N_1(\calG_{n,d})=U_1\).
  \end{aligned}
\label{eq:gamma_1}
\end{equation}

Next, we fix some $y,y'\in[(\lambda^2-\varepsilon)n,(\lambda^2+\varepsilon)n]$. For $U_1\in{[n]\setminus\{1\}\choose d}$, let 
$$
\mathcal{U}_3(U_1):=\left\{U_3\in{[n]\setminus\{3\}\choose d} \st 
 \text{$U_3$ is consistent with $U_1$ and } |U_1\cap U_3|=y'\right\}.
$$
Note that 
\[
    |\mathcal{U}_3(U_1)|= \begin{cases}
                                {d-1\choose y'}{n-1-d\choose d-1-y'}, & \text{if $3\in U_1$},\\
                                {d\choose y'}{n-2-d\choose d-y'}, & \text{otherwise.} 
                            \end{cases}
\]
Let $P_3(U_1)$ denote the conditional  probability that the number of common neighbours of $1$ and $3$  in $\calG_{n,d}$  equals $y'$  given  that $N_1(\calG_{n,d}) = U_1$, that is,
\[
P_3(U_1):=\sum_{U_3\in\mathcal{U}_3(U_1)}\Pr\(N_3(\calG_{n,d})=U_3\mid N_1(\calG_{n,d})=U_1\). 
\]
Due to symmetry,  $P_3(U_1)$ takes only two values $P_3^{in}$ and $P_3^{out}$ depending on whether $3 \in U_1$ or not.
Since $\varepsilon  \in \left(0, \frac{1}{2}\min\{\lambda^2,(1-\lambda)^2\}\right) $, we get that $\varepsilon<\lfloor\lambda-\lambda^2\rfloor$. Thus,
$$
\frac{{d-1\choose y'}{n-1-d\choose d-1-y'}}{{d\choose y'}{n-2-d\choose d-y'}}=\frac{(d-y')^2(n-1-d)}{d(n-2d+y')(n-2d+y'-1)}=\Theta(1).
$$
Then, using \eqref{eq:cond_one_set},
    there is  a sufficiently small constant $\gamma_2>0$ (independent of the choice of $y'$) such that
$$
\min\{P_3^{in},P_3^{out}\}\geq \gamma_2\max\{P_3^{in},P_3^{out}\}.
$$
Also, due to  symmetry,   $\Pr(N_1(\calG_{n,d})=U_1)$ is independent of  the choice of $U_1$. Therefore, 
\begin{align}
 \Pr(X_{13}(n,d)=y') &=
 \sum_{U_1\in {[n]\setminus\{1\}\choose d}}\sum_{U_3\in\mathcal{U}_3(U_1)}
 \Pr(N_1(\calG_{n,d})=U_1,\,N_3(\calG_{n,d})=U_3)\notag\\
 &=
\sum_{U_1}\Pr(N_1(\calG_{n,d})=U_1)\sum_{U_3}\Pr(N_3(\calG_{n,d})=U_3\mid N_1(\calG_{n,d})=U_1)\notag\\
&\geq\min\{P_3^{in},P_3^{out}\}\geq \gamma_2\max\{P_3^{in},P_3^{out}\}. 
\label{eq:gamma_2}
\end{align}

Finally, for $U_1\in{[n]\setminus \{1\}\choose d}$ and $U_3\in\mathcal{U}_3(U_1)$, we let
\begin{align*}
\mathcal{U}'_2(U_1,U_3) & =\left\{U_2\in{[n]\setminus\{2\}\choose d} \st \text{$U_2$ is consistent with $U_1,U_3$ and } |U_1\cap U_2|=y\right\},\\
\mathcal{U}_2(U_1) & =\left\{U_2\in{[n]\setminus\{2\}\choose d} \st 
\text{$U_2$ is consistent with $U_1$ and } |U_1\cap U_2|=y\right\}.
\end{align*}
Using \eqref{eq:gamma_1}, we get that
\begin{align*}
\Pr(& X_{12}(n,d)=y,\, X_{13}(n,d)=y')\\
 &=
\sum_{U_1,U_2,U_3}
 \Pr(N_1(\calG_{n,d})=U_1,\,N_2(\calG_{n,d})=U_2,\,N_3(\calG_{n,d})=U_3)\\
 & \stackrel{\eqref{eq:gamma_1}}\leq \gamma_1
\sum_{U_1,U_2,U_3}
 \Pr(N_3(\calG_{n,d})=U_3\mid N_1(\calG_{n,d})=U_1)\cdot \Pr(N_1(\calG_{n,d})=U_1,\,N_2(\calG_{n,d})=U_2),
\end{align*}
where the both sums above are  over 
$U_1\in {[n]\setminus\{1\}\choose d}$,  $U_3\in\mathcal{U}_3(U_1)$, and
$U_2\in\mathcal{U}'_2(U_1,U_3)$.
Using also \eqref{eq:gamma_2}, we conclude that
\begin{align*}
 \Pr( X_{12}(n,d)\in Y,\, &X_{13}(n,d)\in Y')\\
 & \leq \gamma_1\max\{P_3^{in},P_3^{out}\}\sum_{U_1\in {[n]\setminus\{1\}\choose d},\, U_2\in\mathcal{U}'_2(U_1,U_3)}\Pr(N_1(\calG_{n,d})=U_1,\,N_2(\calG_{n,d})=U_2)\\
  & \leq \gamma_1\max\{P_3^{in},P_3^{out}\}\sum_{U_1\in {[n]\setminus\{1\}\choose d},\, U_2\in\mathcal{U}_2(U_1)}\Pr(N_1(\calG_{n,d})=U_1,\,N_2(\calG_{n,d})=U_2)\\
 &= \gamma_1\max\{P_3^{in},P_3^{out}\}\Pr(X_{12}(n,d)=y)\\
 &\stackrel{\eqref{eq:gamma_2}}\leq
 \frac{\gamma_1}{\gamma_2}\Pr(X_{12}(n,d)=y)\cdot\Pr(X_{13}(n,d)=y'). 
\end{align*}
 Letting $\gamma=\gamma_1/\gamma_2$, we complete the proof.
\end{proof}

\section{Local limit theorem for common neighbours}\label{S:distribution}

In this section,  we  prove   Theorem \ref{T:distribution_pair} and then get   Corollary \ref{L:dist} as a consequence. 
Recall that $N_i(G)$ denotes the set of neighbours of the vertex $i$ in a graph $G$.

 \subsection{Proof of Theorem \ref{T:distribution_pair}}
 Let $A,B \subset [n]\setminus\{i,j\}$, $|A|=|B|=d$, and $|A\cap B|=h$.
  First, we compute the probability that
 $A, B$ are the sets of neighbours of $i$ and $j$ in $\calG_{n,d}$.   
 \begin{equation}\label{eq:iAjB}
 \begin{aligned}
 	&\Pr\Big( N_i(\calG_{n,d}) = A, \
   N_j(\calG_{n,d}) = B\Big)   \\
   &\hspace{2cm}= \Pr\Big(N_i(\calG_{n,d}) = A\Big) \cdot  \Pr\Big(N_j(\calG_{n,d}) = B  \mid  N_i(\calG_{n,d}) = A\Big).
   \end{aligned}
 \end{equation}
 From  Lemma \ref{l:neighb}, we know that 
 \begin{align*}
 	\Pr(N_i(\calG_{n,d}) = A)  
 	  \sim \sqrt{2\pi n}\, \lambda^{d+\frac12} (1-\lambda)^{n-d-\frac12}.
 \end{align*}
We can also use Lemma \ref{l:neighb} to find the second factor  in the right-hand side  of \eqref{eq:iAjB}.
 Indeed, conditioning with respect to the neighbourhood of vertex $i$ is equivalent to the random graph $\calG_{\dvec'}$ with almost $d$-regular degree sequence 
 $\dvec' \in \Naturals^{n-1}$.
 Applying  Lemma~\ref{l:neighb}, we find that
 \begin{align*}
 	  &\Pr\Big(N_j(\calG_{n,d}) = B  \mid  N_i(\calG_{n,d}) = A\Big)
 	  \\  &\sim  \sqrt{2\pi \lambda   (1-\lambda ) n}  
 	  \cdot
 	  \prod_{k \in B} \dfrac{ d - \mathbf{1}_{k\in A}}{n-2}  \prod_{k \notin  B \cup\{i,j\} } \left(1 -  \dfrac{d - \mathbf{1}_{k\in A}}{n-2}\right) 
 	  \\
 	    &\sim   \sqrt{2\pi  n} \, \lambda^{d+\frac12} (1-\lambda)^{n-d-\frac32} \left(\dfrac{n-1}{n-2}\right)^{n-2}  \left(\dfrac{d-1}{d}\right)^{h}   \left(\dfrac{n-d-2}{n-d-1}\right)^{n-2-2d+h}.
\end{align*}
Observe that
\begin{align*} 	 
\left(\dfrac{n-1}{n-2}\right)^{n-2}         \left(\dfrac{n-d-2}{n-d-1}\right)^{n-2-2d}
 \sim    
 	       \left(\dfrac{n-d-2}{n-d-1}\right)^{-d}    
 	     &\sim	   \exp\left(\dfrac{\lambda}{1-\lambda} \right),
\\	 
    \left(\dfrac{d-1}{d}\right)^{h}        \left(\dfrac{n-d-2}{n-d-1}\right)^{h}
 	      \sim	   \exp\left(  -   \dfrac{h}{d} - \dfrac{h}{n-d-1} \right) &\sim   \exp\left(  -   \dfrac{h} {\lambda(1-\lambda)n}\right).
 \end{align*}
  Substituting the above formulas   
 into \eqref{eq:iAjB}, we derive  that
 \[
 	\Pr\Big( N_i(\calG_{n,d}) = A, \
   N_j(\calG_{n,d}) = B\Big) \sim   2\pi n \,\lambda^{2d+1} (1-\lambda)^{2n-2d-2}
 	 \exp\left(\dfrac{\lambda}{1-\lambda} -   \dfrac{h} {\lambda(1-\lambda)n}\right).
 \]
This formula can be rewritten  as 
 \begin{equation}\label{eq:NN}
 	\Pr\Big( N_i(\calG_{n,d}) = A, \
   N_j(\calG_{n,d}) = B\Big) \sim  (1-\lambda)  \binom{n-2}{d}^{-2} \exp\left(\dfrac{\lambda}{1-\lambda} -   \dfrac{h} {\lambda(1-\lambda)n}\right),
 \end{equation}
by  using the Stirling approximation to estimate 
$ 
 	\binom{n-2}{d}
  \sim
 	  \sqrt{\frac{1}{2\pi n}}  
 	   \lambda^{-d-\frac12 }(1-\lambda)^{-n+ \frac32 +d}.
$

Next, the number of   choices of  $A,B \subset [n]\setminus\{i,j\}$ such that    $|A|=|B|=d$ and $|A\cap B|=h$
 equals   $\binom{n-2}{d} \binom{d}{h} \binom{n-2-d}{d-h}$. 
Summing \eqref{eq:NN} over all such choices,  we get that
  \[
 	\Pr\Big(|N_{i}(\calG_{n,d}) \cap N_{j}(\calG_{n,d})| = h,\  ij \notin \calG_{n,d}\Big) \sim  
 	 (1-\lambda)\frac{ \binom{d}{h}  \binom{n-2-d}{d-h} } {\binom{n-2}{d}}   \exp\left(\dfrac{\lambda}{1-\lambda} -   \dfrac{h} {\lambda(1-\lambda)n}\right).
 \]
From  \cite[Theorem 4]{McKay2011} we know that   
 $\Pr(ij \notin \calG_{n,d}) \sim 1-\lambda$.
 Part (a) follows.
 
 The proof of part (b) is similar to part (a). The only difference is that we need to consider 
 the sets $A \subset [n]\setminus\{i\}$ and $ B \subset [n]\setminus\{j\}$ such that  
 $i \in B$ and $j \in A$. First,  for such $A,B$, applying  Lemma \ref{l:neighb}
 and using the Stirling approximation, we find that  
 \begin{equation*} 
 	\Pr\Big(N_i(\calG_{n,d}) = A,\ N_j(\calG_{n,d}) = B\Big) \sim  \lambda  \binom{n-2}{d-1}^{-2} \exp\left(\dfrac{\lambda}{1-\lambda} -   \dfrac{h} {\lambda(1-\lambda)n}\right).
 \end{equation*}
 Summing  over all   choices
 $A  \subset [n]\setminus\{i\}$, $B \subset [n]\setminus\{j\}$ such that    $|A|=|B|=d$,  $|A\cap B|=h$,
 and $i \in B$, $j \in A$,  we get that
  \[
 	\Pr\Big(|N_{i}(\calG_{n,d}) \cap N_{j}(\calG_{n,d})| = h,\  ij \in \calG_{n,d}\Big)  \sim  
 	 \lambda\frac{ \binom{d-1}{h}  \binom{n-1-d}{d-1-h} } {\binom{n-2}{d-1}}   \exp\left(\dfrac{\lambda}{1-\lambda} -   \dfrac{h} {\lambda(1-\lambda)n}\right).
 \]
 From  \cite[Theorem 4]{McKay2011} we know that  
 $\Pr(ij \in \calG_{n,d}) \sim \lambda$.
 Part (b) follows.

 \subsection{Proof of Corollary \ref{L:dist}}
 
By assumptions of part (a), we  have that 
 	 \begin{equation*}
 	    \dfrac{\lambda}{1-\lambda} -   \dfrac{h} {\lambda(1-\lambda)n}  = o(1).
 	 \end{equation*}
Applying  Theorem \ref{T:distribution_pair}, we find that
 	 \begin{align*}
 	 	\Pr\Big(X_{ij}(n,d) = h \mid  ij \notin \calG_{n,d}\Big)  &\sim 
 	 	\frac{ \binom{d}{h}  \binom{n-2-d}{d-h} } {\binom{n-2}{d}}   
 	 	= \dfrac{(n-1-2d+h)(n-1)}{(n-1-d)^2}  \cdot \frac{\binom{d}{h} \binom{n-1-d}{d-h}}{ \binom{n-1}{d}},
		 \\
		 \Pr\Big(X_{ij}(n,d) = h \mid  ij \in \calG_{n,d}\Big)   &\sim 
 	 	\frac{ \binom{d-1}{h}  \binom{n-1-d}{d-1-h} } {\binom{n-2}{d-1}}   
 	 	 = \dfrac{(d-h)^2(n-1)}{d^2 (n-2d+h)} \cdot 
 	 	 \frac{\binom{d}{h} \binom{n-1-d}{d-h}}{ \binom{n-1}{d}}.
 	 \end{align*}
 	 Recalling that 
 	 \[
 	 		d \sim \lambda n, \qquad  d-h \sim (1-\lambda)\lambda n, \qquad 
 	 	n-2d+h \sim (1-\lambda)^2 n,
 	 \]
 	   and using the law of total  probability,  we get that
 	  	\[
 	  		\Pr\Big(X_{ij}(n,d) = h\Big)  	
 	  		    \sim  	\frac{\binom{d}{h} \binom{n-1-d}{d-h}}{ \binom{n-1}{d} }.
 	  	\]

 	  	Next, let  $t := h - \lambda d$.    By the assumptions, we get  $t =  O(\sqrt{n}\log n )$.
 	 Using the de Moivre--Laplace theorem, we find that
 \[
 	 	\binom{d}{h} \lambda^{h} (1-\lambda)^{d-h} \sim 
 	 	\frac{1}{\sqrt{2\pi   \lambda(1-\lambda) d}} e^{- \frac{(h-\lambda d)^2}{ 2  \lambda(1-\lambda)d }} \sim
 	 	\frac{1}{\sqrt{ 2\pi   \lambda^2 (1-\lambda) n}} e^{- \frac{t^2}{ 2  \lambda^2(1-\lambda)n }}.
\]
and 
\begin{align*}
	\binom{n-1-d}{d-h} \lambda^{d-h}(1-\lambda)^{n-1-2d+h} & \sim 
\frac{1}{\sqrt{2\pi   \lambda(1-\lambda) (n-1-d)}} e^{- \frac{(d-h-\lambda (n-1-d))^2}{ 2  \lambda(1-\lambda) (n-1-d) }}
\\ &\sim \frac{1}{\sqrt{ 2\pi   \lambda (1-\lambda)^2 n}} e^{- \frac{t^2}{ 2  \lambda (1-\lambda)^2n }}.
\end{align*}
Similarly, we get also  that
\begin{align*}
		 \binom{n-1}{d} \lambda^{d} (1-\lambda)^{n-1-d}& \sim 
		 \frac{1}{\sqrt{2\pi   \lambda(1-\lambda) n} }; \\
		 \binom{N}{h} p^h (1-p)^{N-h} &\sim 
		 	\frac{1}{\sqrt{2\pi   p(1-p) N}} e^{- \frac{(h-pN)^2}{ 2  p(1-p)N }}
		 	\sim   \frac{1}{\lambda(1-\lambda) \sqrt{2\pi    n}} e^{- \frac{t^2}{ 2  \lambda^2 (1-\lambda)^2n }}.
\end{align*}
%
Combining the above, we derive that 
\begin{equation*} 
		\frac{\binom{d}{h} \binom{n-1-d}{d-h}}{ \binom{n-1}{d}  }
		 \sim    \frac{1}{\lambda(1-\lambda) \sqrt{2\pi n}}  e^{- \frac{t^2}{ 2  \lambda^2(1-\lambda)  n }-  \frac{t^2}{ 2  \lambda(1-\lambda)^2  n }} 
		 \sim    \binom{N}{h} p^h (1-p)^{N-h}.
\end{equation*}
This completes the proof of part (a).

We proceed to part  (b). 
Since asymptotic bounds in Theorem \ref{T:distribution_pair} hold uniformly over $h\in[d]$ and the factor $\exp\left(\frac{\lambda}{1-\lambda}-\frac{h}{\lambda(1-\lambda)n}\right)$ is bounded for all $h\in[d]$, it is sufficient to prove that
$$
 \sum_{|h-\lambda^2n|>\sqrt{n}\log n}\frac{ \binom{d}{h}  \binom{n-2-d}{d-h} } {\binom{n-2}{d}}=e^{-\omega(\log n)},\quad
 \sum_{|h-\lambda^2n|>\sqrt{n}\log n}\frac{ \binom{d-1}{h}  \binom{n-1-d}{d-h-1} } {\binom{n-2}{d-1}}=e^{-\omega(\log n)}.
$$
Now, part (b) follows from exponential tail bounds for hypergeometric random variables; see, for example, \cite{GW2017}.

\section{Conditioning  with respect to the number  of  common neighbours}\label{S:cond}

In this section we prove Theorem \ref{t:phi}.  
For $h \in [d]$, let $S_{n,d}^{h}$ denote the set  of $d$-regular graphs with vertex set $[n]$ such that vertices $i$ and $j$ have exactly $h$ common neighbours.  Let $\calG_{n,d}^h$ denote the uniform random element of $S_{n,d}^h$.  First, we construct a bipartite meta-graph, whose vertices are graphs of
$S_{n,d}^h$ and $S_{n,d}^{h+1}$.  
Using  a general coupling theorem for  bipartite graphs, we get
  a coupling $(\calG_{n,d}^h, \calG_{n,d}^{h+1})$  that does not change much the graph structure. 
Then, we combine several couplings $(\calG_{n,d}^h, \calG_{n,d}^{h+1})$ to get the desired coupling  of  $\calG_{n,d}$ and  $\calG_{n,d}^h$.

\subsection{General coupling for a bipartite graph}\label{S:bipartite}

In this section, we establish a coupling result   in a general setting, which we later use to prove Theorem \ref{t:phi}.
Let $D$ be a bipartite graph with vertices partitioned into sets  $S$ and $T$. For simplicity we identify $D$ with its set of edges from $S \times T$.
The first part of the following theorem appeared as \cite[Theorem 2.1]{IMSZ} with slightly better constants in the estimate for  $\Pr(XY \notin D)$. 
\begin{thm}\label{T:coupling}
	Let $\delta,\eps \in (0,1)$ and
	\begin{align*}
		\GoodS &:= \left\{x\in S \st  \deg_D(x) \geq \dfrac{(1-\eps)|D|}{|S|}\right\},\\
		\GoodT &:= \left\{y\in T \st  \deg_D(y) \geq \dfrac{(1-\eps)|D|}{|T|}\right\}.
	\end{align*}
	Assume that $|\GoodS| \geq (1-\delta)|S|$ and  $|\GoodT| \geq (1-\delta)|T|$.
			Then, there is a coupling $(X,Y)$ such that
			$X, Y$ are uniformly distributed on $S$ and $T$, respectively, and
			\[
			\Pr(XY \notin D) \leq   2 \eps +  4\delta.
			\]
			Furthermore,  for   any set of edges $H \subseteq D$,  
			\[
			\max_{x\in S}\,\Pr\left(XY \in H \mid X=x\right) \leq \varDelta_S(H) \left(\dfrac{|S|}{|D|} + \dfrac{2}{(1-\delta)|T|} \right),
			\] 
			where $\varDelta_S(H)$  is the maximal number of edges in $H$ incident to  a vertex from $S$. 
		\end{thm}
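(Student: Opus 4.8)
### Proof proposal for Theorem~\ref{T:coupling}

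\textbf{Setting up the coupling.} The plan is to build the coupling $(X,Y)$ on $S \times T$ by a two-stage procedure and estimate its failure probability directly. First I would dispose of the easy half of the statement: since $D$ is bipartite with $|D|$ edges, the average degree of a vertex in $S$ is $|D|/|S|$, so $\GoodS$ is exactly the set of vertices whose degree is within a factor $(1-\eps)$ of average, and likewise for $\GoodT$; the hypotheses say the bad sets are small. The coupling I would use: pick a uniformly random edge $e = xy \in D$; with probability depending on $\deg_D(x)$, "thin" it so that the marginal of its $S$-endpoint becomes exactly uniform on $S$ rather than degree-biased, and do the analogous thinning on the $T$-side, outputting a fresh independent pair whenever thinning rejects. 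Concretely, one standard device is: sample $X$ uniform on $S$ and $Y$ uniform on $T$ independently; if $XY \in D$ keep it; otherwise "repair". The cleaner route — and the one I expect the paper takes following \cite[Theorem 2.1]{IMSZ} — is to define a bipartite-graph flow / fractional matching between the uniform distribution on $S$ and the uniform distribution on $T$ that routes as much mass as possible along edges of $D$, then let $(X,Y)$ be that optimal coupling plus an arbitrary completion off $D$.

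\textbf{Bounding $\Pr(XY \notin D)$.} For this I would estimate, for the edge-sampling coupling, the total mass that fails to land on $D$. Sampling a uniform random edge $e=xy$ of $D$ gives $X$ distributed as $\deg_D(x)/|D|$ and $Y$ as $\deg_D(y)/|D|$; to correct the $S$-marginal to uniform I keep $e$ with probability $\frac{|D|}{|S|\deg_D(x)}$ when $x \in \GoodS$ (this is $\le 1$ by definition of $\GoodS$ up to the $(1-\eps)$ factor, so I should keep with probability $\min\{1,\cdot\}$ and absorb the slack), and similarly on the $T$-side; the probability that both keep-tests pass is at least $1 - 2\eps - O(\delta)$ after accounting for the $x \notin \GoodS$ or $y \notin \GoodT$ events, which contribute at most $\delta$ each (mass of bad vertices under the uniform-on-$S$, uniform-on-$T$ targets is $\le \delta$, and under the edge-biased law the bad-vertex mass can be larger but is controlled because we only need the symmetric-difference bound). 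Carefully splitting into the events "$X$ bad", "$Y$ bad", "both good but thinning rejects" and using $\deg_D(x) \ge (1-\eps)|D|/|S|$ on the last one gives the claimed $2\eps + 4\delta$; the factor $4$ rather than $2$ is the price of not tracking constants tightly, matching the statement.

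\textbf{Bounding the conditional hitting probability of $H$.} Fix $x \in S$ and a set $H \subseteq D$ of edges. Conditioned on $X=x$, the law of $Y$ is some distribution on $N_D(x) \cup (\text{completion})$; I want to show it puts mass at most $\varDelta_S(H)\big(\frac{|S|}{|D|}+\frac{2}{(1-\delta)|T|}\big)$ on edges of $H$. The number of $H$-edges at $x$ is at most $\varDelta_S(H)$, so it suffices to show each individual edge $xy$ carries conditional mass $\le \frac{|S|}{|D|}+\frac{2}{(1-\delta)|T|}$. The "main term" $\frac{|S|}{|D|}$ comes from the edge-sampling part: edge $xy$ is picked with probability $1/|D|$, and conditioning on $X=x$ divides by $\Pr(X=x) = 1/|S|$, giving $|S|/|D|$; the second term $\frac{2}{(1-\delta)|T|}$ bounds the contribution of the "repair/completion" mass, which when it fires lands $Y$ on a vertex chosen (essentially) uniformly among the $\ge (1-\delta)|T|$ good $T$-vertices, so any fixed $y$ gets at most $\frac{1}{(1-\delta)|T|}$ there, with a factor $2$ slack for the two repair mechanisms (one per side).

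\textbf{Main obstacle.} The delicate point is the second stage: arranging the completion of the coupling off $D$ so that it simultaneously (i) does not spoil the uniform marginals, and (ii) spreads $Y$-mass nearly uniformly over $\GoodT$ so that the per-edge bound $\frac{1}{(1-\delta)|T|}$ actually holds. This is essentially a bipartite transportation/Hall-type argument: one must check that the residual defect measure on $S$ (mass not yet matched along $D$) can be routed to the residual defect on $T$ while staying supported on $\GoodS \times \GoodT$ up to a controlled error, which is where the degree lower bounds $\deg_D \ge (1-\eps)\,\overline{\deg}$ and the size bounds $|\GoodS|,|\GoodT| \ge (1-\delta)|\cdot|$ get combined via a counting inequality. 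Everything else is bookkeeping with the definitions of $\GoodS,\GoodT$ and elementary bounds on $\min\{1,\cdot\}$ thinning probabilities.
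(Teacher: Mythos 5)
Your main construction (pick a uniformly random edge of $D$, accept its endpoints with degree-dependent Bernoulli thinning, repair a rejected side by an independent uniform vertex of $\GoodS$ or $\GoodT$, complete to the exact uniform marginals, and bound $\Pr(X=x,Y=y)$ by the edge term $1/|D|$ plus repair terms of order $1/((1-\delta)|T|)$ before multiplying by $\varDelta_S(H)$) is essentially the paper's proof. The paper's only refinements are to take the acceptance probability to be exactly $(1-\eps)|D|/\bigl(|S|\deg_D(\hat X)\bigr)$ — which is $\le 1$ precisely because $\hat X\in \GoodS$, so the retained mass is the constant $(1-\eps)/|S|$ and neither your $\min\{1,\cdot\}$ device nor any Hall-type routing for the completion is needed — and to restore the full uniform marginals by outputting an independent uniform vertex of $S\setminus\GoodS$ (resp.\ $T\setminus\GoodT$) with probability $1-|\GoodS|/|S|$, which is exactly where the extra $2\delta$ (hence $2\eps+4\delta$) comes from.
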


		\begin{proof}
			First, we construct a coupling that  produces $ \tilde{X} \in \GoodS$ and $\tilde{Y}\in \GoodT$.
			All random variables in the following procedure are generated independently.

			\begin{itemize}
				\item[1.] Uniformly at random choose an edge $\hat{X}\hat{Y} \in D$.
				\item[2.] Uniformly at random choose vertices  $X'\in \GoodS$ and  $Y'\in \GoodT$.
				\item[3.]   If $\hat{X}\notin  \GoodS$ then set $\tilde{X}:=X'$.  
				\item [4.]  If $\hat{Y}\notin  \GoodT$ then set $\tilde{Y}:=Y'$.
				\item[5.]   If $\hat{X}\in \GoodS$  generate
				$\xi_X \in \operatorname{Bernoulli}\left(\dfrac{(1-\eps)|D|}{|S|\deg_D(\hat{X})}\right)$. 
				\item[6.]   If $\hat{Y}\in  \GoodT$  generate $\xi_Y \in \operatorname{Bernoulli}\left(\dfrac{(1-\eps)|D|}{|T|\deg_D(\hat{Y})}\right)$.
				\item[7.]  Set 
				$
				\tilde{X}:=    \begin{cases}
					\hat{X}, & \text{if $\xi_X =1$},\\
					X', & \text{otherwise},
				\end{cases}
				$  \quad
				$
				\tilde{Y}:=    \begin{cases}
					\hat{Y}, & \text{if $\xi_Y =1$},\\
					Y', & \text{otherwise}.
				\end{cases}
				$
			\end{itemize}
			
			\noindent
			For any $x\in  \GoodS$,  observe that
			\begin{align*}
				\Pr(\tilde{X} = x) &= 
				\Pr(\xi_X = 1 \text{  and } \hat{X} =x)\\
				&+\Pr(X' = x)  \left( \Pr( \hat{X} \notin \GoodS) +
				\Pr(\xi_X = 0  \text{  and } \hat{X} \in \GoodS)     	\right). 
			\end{align*}
			Clearly,   $ \Pr(X' = x) = \frac{1}{|\GoodS|} $.
			Since $\Pr(\hat{X}=x) = \frac{\deg_D(x)}{|D|}$,  we derive that 
			\begin{equation}\label{eq:1-eps}
				\Pr(\xi_X = 1 \text{  and } \hat{X} =x) =  \Pr(\xi_X = 1 \mid \hat{X} =x)  \dfrac{\deg_D(x)}{|D|} = \dfrac{1-\eps}{|S|}.
			\end{equation}
			Thus,  $\Pr(\tilde{X} = x)$ is independent of $x$, that is, $\tilde{X}$ is uniformly distributed on $\GoodS$.
			Similarly, we show that  $\tilde{Y}$ is uniformly distributed on $\GoodT$.

			Next, by the construction, if  $\hat{X} \in \GoodS$, $\hat{Y}\in \GoodT$, and $\xi_X = \xi_Y = 1 $ then  
			$\tilde{X}\tilde{Y} \in D$.
			Combining \eqref{eq:1-eps} (and the same formula for $ \Pr(\xi_Y = 1 \text{  and } \hat{Y} =y) $) and the union bound, we find that
			\begin{align*}
				\Pr(\tilde{X}\tilde{Y} \notin D)&\leq 1 -    \Pr(\xi_X = 1 \text{  and } \hat{X}  \in \GoodS)
				+1 - \Pr(\xi_Y = 1 \text{  and } \hat{Y}  \in \GoodS)
				\\
				&= 2 - \dfrac{(1-\eps) |\GoodS|}{|S|} -   \dfrac{(1-\eps) |\GoodT|}{|T|} 
				\leq 2 - 2(1-\eps)(1-\delta) \leq 2\eps + 2\delta.
			\end{align*}
			
			To complete the construction of $X$ and $Y$, we consider $X''$  generated uniformly from $S - \GoodS$.
			Set $X := \tilde{X}$ with probability  $| \GoodS|/  |S|$ and $X:= X''$ 
			with probability  $1-  | \GoodS|/  |S|$. Similarly, define $Y$. Then, 
			$X$, $Y$ are uniformly distributed on $S$ and $T$, respectively. Using the assumptions, we get the required bound
			\[
			\Pr(XY\notin D) \leq \Pr(\tilde{X}\tilde{Y} \notin D) + \Pr(X =X'') + \Pr(Y = Y'') \leq 2\eps + 4\delta.
			\]

			Finally, consider any    $H \subseteq D$. It is sufficient to prove that, for all $x y\in D$,
			\begin{equation}\label{Pr_joint}
				\Pr(X=x, Y=y)  \leq \dfrac{1}{|D|} + \dfrac{2}{(1-\delta)|S|\cdot |T|}.
			\end{equation}
			Indeed, if \eqref{Pr_joint} is true then
			\[
			\Pr(XY \in H \mid X=x)   =   |S|\cdot\sum_{y:\, xy \in H} 
			\Pr(X =x, Y=y)  \leq 
			\varDelta_S(H) \left(\dfrac{|S|}{|D|} + \dfrac{2}{(1-\delta)|T|} \right).
			\]
			If $x\notin \GoodS$  or $y \notin \GoodT$ then the events $X=x$ and $Y=y$ are independent, therefore
			\[
			\Pr(X=x, Y=y) = \Pr(X = x)  \Pr(Y =y) = \dfrac{1}{|S| \cdot|T|},
			\]
			If $x\in \GoodS$  and $y \in \GoodT$, we estimate
			\begin{align*}
			\Pr(X=x, Y=y)  
   &\leq \Pr(\hat{X} = x, \hat{Y}=y) + 
   \Pr(X' = x, Y=y) +  \Pr(X = x, Y'=y)
    \\
    &= \dfrac{1}{|D|} + \dfrac{1}{|\GoodS|} \cdot \dfrac{1}{|T|}
    +\dfrac{1}{|S|} \cdot \dfrac{1}{|\GoodT|}.
			\end{align*}
			The above two bounds imply \eqref{Pr_joint}, completing the proof of the theorem.
		\end{proof}


\subsection{Coupling  of  $\calG_{n,d}^h$ and $\calG_{n,d}^{h+1}$ }\label{s:2coupling}

Recall that $S_{n,d}^{h}$ denotes the set  of $d$-regular graphs with vertex set $[n]$ such that vertices $i$ and $j$ have exactly $h$ common neighbours. 
To apply Theorem \ref{T:coupling}, we construct  the bipartite graph $D$ as follows. 
Let $S:= S_{n,d}^h$ and $T:= S_{n,d}^{h+1}$. Two graphs  $G \in S_{n,d}^h$ and $G' \in S_{n,d}^{h+1}$ are connected by an edge in  $D$    if  there are  distinct vertices $u, v, w \in [n]-\{i,j\}$ such that $iu \in G \cap G'$,
$iv \notin G \cup G'$,
$jv \in G - G'$,
 $uw \in G- G'$, $uj \in G' - G$,
$vw \in G'-G$ and all other edges of $G$ and $G'$ are the same; see Figure~\ref{f:switch} for an illustration.
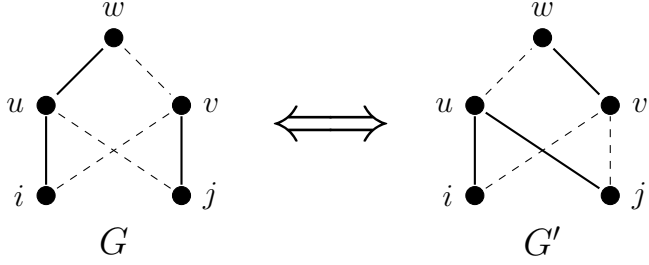
\begin{figure}[h!]
	\centering
	\begin{tikzpicture}[scale=0.6]
		\node (w) [label = above:{$w$}]at (0,4) {};
		\node (u) [label=left:{$u$}]   at (-1.5,2.5) {};
		\node (v)  [label=right:{$v$}]  at (1.5,2.5) {};
		\node (n1) [label=left:{$i$}]  at (-1.5,0.5) {};
		\node (n2) [label=right:{$j$}]  at (1.5,0.5) {};
		\node at (0,-0.5)  {\large $G$};
		

		\draw [-,thick] (n1) -- (u);
		\draw [-,dashed](n2) -- (u);
		\draw[-,thick] (n2) -- (v);
		\draw[-, thick] (u)--(w);
		\draw[-,dashed](n1)--(v);
		\draw[-,dashed](w)--(v);
		
		\draw [fill] (n1) circle (0.2); \draw [fill] (n2) circle (0.2);  \draw [fill] (u) circle (0.2);   \draw [fill] (v) circle (0.2);  \draw [fill] (w) circle (0.2);

		\node   at (4.8,2)  {\Huge $\Longleftrightarrow$};
		
		\begin{scope}[shift={(9.5,0)}]

		\node (w) [label = above:{$w$}]at (0,4) {};
		\node (u) [label=left:{$u$}]   at (-1.5,2.5) {};
		\node (v)  [label=right:{$v$}]  at (1.5,2.5) {};
		\node (n1) [label=left:{$i$}]  at (-1.5,0.5) {};
		\node (n2) [label=right:{$j$}]  at (1.5,0.5) {};
		\node at (0,-0.5)  {\large $G'$};
		

		\draw [-,thick] (n1) -- (u);
		\draw [-,thick](n2) -- (u);
		\draw[-,dashed] (n2) -- (v);
		\draw[-, dashed] (u)--(w);
		\draw[-,dashed](n1)--(v);
		\draw[-,thick](w)--(v);
		
		\draw [fill] (n1) circle (0.2); \draw [fill] (n2) circle (0.2);  \draw [fill] (u) circle (0.2);   \draw [fill] (v) circle (0.2);  \draw [fill] (w) circle (0.2);

		\end{scope}
	\end{tikzpicture}
	\caption{Two graphs $G \in S^{h}_{n,d}$ and $G'\in S^{h+1}_{n,d}$ adjacent in $D$.}
	\label{f:switch}
\end{figure}

\begin{lemma}\label{l:condeg}
  Let \eqref{ass-main} hold.
	  If $h \sim \lambda^2 n$ then 
	\[
		\E[\deg_{D} (\calG_{n,d}^{h})  ]\sim \lambda^3(1-\lambda)^3 n^3 , \qquad \E[\deg_{D} (\calG_{n,d}^{h+1})  ]\sim \lambda^3(1-\lambda)^3 n^3.
	\]
	Furthermore,     with probability at least  $1-  e^{-\omega(\log n)}$,
	  \begin{align*}
	         \deg_{D} (\calG_{n,d}^{h})   &=  \left(1+o\left( \dfrac{\log n}{n}\right)\right)\E[\deg_{D} (\calG_{n,d}^{h})  ], \\
	         \deg_{D} (\calG_{n,d}^{h+1})   &=  \left(1+o\left( \dfrac{\log n}{n}\right)\right)\E[\deg_{D} (\calG_{n,d}^{h+1})  ].
	   \end{align*}  
\end{lemma}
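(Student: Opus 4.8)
The plan is to compute $\E[\deg_D(\calG_{n,d}^h)]$ by counting switchings and then to get concentration from Lemma~\ref{l:concentration}. For the expectation, observe that $\deg_D(G)$ for $G\in S_{n,d}^h$ equals the number of ordered triples $(u,v,w)$ of distinct vertices in $[n]\setminus\{i,j\}$ for which the six incidence conditions from the definition of $D$ hold in $G$ (namely $iu\in G$, $iv\notin G$, $jv\in G$, $uw\in G$, $ju\notin G$, $vw\notin G$, the last three being what is required of $G$ so that the switched graph lies in $S_{n,d}^{h+1}$). Thus
\[
\E[\deg_D(\calG_{n,d}^h)] = \sum_{(u,v,w)} \Pr\Big( iu, jv, uw \in \calG_{n,d}^h,\ iv, ju, vw \notin \calG_{n,d}^h \Big),
\]
the sum being over the $(n-2)(n-3)(n-4)\sim n^3$ ordered triples of distinct vertices avoiding $\{i,j\}$. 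The key point is that $\calG_{n,d}^h$ is the uniform random $d$-regular graph \emph{conditioned} on $X_{ij}=h$, and this conditional measure should place each of the six prescribed (non-)adjacencies at roughly its unconditional density $\lambda$ or $1-\lambda$, independently up to lower-order corrections; since three of them are edges and three are non-edges, each summand is $\sim \lambda^3(1-\lambda)^3$, giving $\E[\deg_D(\calG_{n,d}^h)]\sim \lambda^3(1-\lambda)^3 n^3$. The computation for $S_{n,d}^{h+1}$ is identical by symmetry of the switching (the roles of $G$ and $G'$ are interchangeable, and $h+1\sim\lambda^2 n$ as well).

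To make the estimate of each summand rigorous, I would condition on the neighbourhoods $N_i,N_j$ of $i$ and $j$: the event $\{X_{ij}=h\}$ depends only on $(N_i,N_j)$, and given $(N_i=A, N_j=B)$ with $|A\cap B|=h$ the graph $\calG_{n,d}$ restricted to $[n]\setminus\{i,j\}$ is a uniform random graph with an almost $d$-regular degree sequence $\dvec'$ (components $d-\mathbf 1_{k\in A}-\mathbf 1_{k\in B}$). Whether each of $iu,iv,ju,jv$ is an edge is then simply determined by whether $u,v\in A$ and $u,v\in B$, so summing over triples reorganises into: (number of triples with $u\in A\setminus B$, $v\in B\setminus A$, $w$ arbitrary) $\times\Pr(uw\in\calG_{\dvec'}, vw\notin\calG_{\dvec'})$. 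The counting factor is $|A\setminus B|\cdot|B\setminus A|\cdot n \sim (1-\lambda)\lambda n\cdot(1-\lambda)\lambda n\cdot n$ since $|A|=|B|=d\sim\lambda n$ and $|A\cap B|=h\sim\lambda^2 n$; and the probability factor is $\sim\lambda(1-\lambda)$ by the edge-probability estimates for almost regular graphs (e.g.\ the two-edge joint probability, which follows from \cite[Theorem~2.1]{McKay2011} or a short computation with Lemma~\ref{l:neighb} analogous to \eqref{eq:cond_two_sets}). Multiplying gives $\lambda^3(1-\lambda)^3 n^3$, uniformly over admissible $(A,B)$, hence after averaging over $(N_i,N_j)$ the claimed asymptotics for the expectation.

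For the concentration statement, I would apply Lemma~\ref{l:concentration} with an appropriate set $Y$ of vertex pairs. The cleanest route: fix (or average over) the neighbourhoods $N_i=A$, $N_j=B$; then $\deg_D(\calG_{n,d}^h)$ becomes, up to the deterministic counting factors above, a sum over $w$ of indicators of the form $\mathbf 1[uw\in\calG_{\dvec'}]\mathbf 1[vw\notin\calG_{\dvec'}]$ with $u\in A\setminus B$, $v\in B\setminus A$ — i.e.\ a linear statistic $|Y\cap\calG_{\dvec'}|$ minus $|Y'\cap\calG_{\dvec'}|$ for suitable pair-sets $Y,Y'$ of size $\Theta(n^2)$, plus a bilinear correction of smaller order. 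Lemma~\ref{l:concentration} gives each such linear statistic within a $(1+o(\log n/n))$ factor of its mean with probability $1-e^{-\omega(\log n)}$, and a union bound over the $O(1)$-many statistics involved (and, if needed, over the typical choices of $(A,B)$, whose number is controlled and which are themselves concentrated) yields the result. \textbf{The main obstacle} is handling the conditioning on $\{X_{ij}=h\}$ cleanly: one must argue that conditioning on $(N_i,N_j)$ — rather than on the event $\{X_{ij}=h\}$ directly — does not distort the switching count, and that the switch operation genuinely maps into $S_{n,d}^{h+1}$ (the six conditions are exactly designed so that $\deg$ is preserved and $|N_i\cap N_j|$ increases by one, via adding $v$ and removing $u$ from the common neighbourhood), together with verifying uniformity of the conditional $\calG_{\dvec'}$ so that Lemmas~\ref{l:neighb} and~\ref{l:concentration} apply. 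The degree-sequence bookkeeping and the second-moment/union-bound details are routine once this conditioning is set up correctly.
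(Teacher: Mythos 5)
Your computation of the expectation is essentially sound and uses the same device as the paper: the event $\{X_{ij}(n,d)=h\}$ is measurable with respect to $(N_i,N_j)$, and conditionally on $(N_i,N_j)=(A,B)$ the rest of the graph is a uniform random graph with an almost $d$-regular degree sequence, so the count reduces to $(d-h)^2$ admissible pairs $(u,v)$ times $n$ choices of $w$ times a two-adjacency probability $\sim\lambda(1-\lambda)$. (A small slip in your aside: the switch adds $u$ to the common neighbourhood of $i$ and $j$; $v$ merely leaves $N_j$ and was never a common neighbour.)

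The genuine gap is in the concentration step. After fixing $(A,B)$, $\deg_D(\calG_{n,d}^h)$ is \emph{not} a linear statistic plus a lower-order bilinear correction. Writing $\mathbf{1}[uw\in G]\,\mathbf{1}[vw\notin G]=\mathbf{1}[uw\in G]-\mathbf{1}[uw\in G]\,\mathbf{1}[vw\in G]$, the first part sums (by $d$-regularity) to the essentially deterministic quantity $(1+O(n^{-1}))(d-h)^2 d$, while the second part is the path count $\sum_{u,v}|\{w: uw,vw\in G\}|$, a genuinely quadratic statistic of size $\sim\lambda d(d-h)^2=\Theta(n^3)$ — the \emph{same} order as the main term; indeed the answer $(1-\lambda)(d-h)^2d\sim\lambda^3(1-\lambda)^3n^3$ arises exactly from this cancellation. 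So it cannot be dismissed as a "bilinear correction of smaller order", and Lemma~\ref{l:concentration}, which applies to $|Y\cap\calG_{\dvec}|$ for a \emph{fixed} set $Y$ of vertex pairs, does not apply to it under conditioning on $(N_i,N_j)$ alone; note also that the precision required is the fine $(1+o(\log n/n))$, so a crude whp bound on this term would not do. The missing idea — and the heart of the paper's proof — is to condition additionally, for each fixed $u$, on the neighbourhood of $u$ (the event $\calE_{iju}$ in the paper): then the inner sum $\sum_v|\{w: uw,vw\in G\}|$ becomes the number of edges of an almost-regular random graph on $[n]\setminus\{i,j,u\}$ between two fixed sets of linear size (namely $N(u)$ and $N(j)\setminus N(i)$), i.e.\ a genuine linear statistic to which Lemma~\ref{l:concentration} applies uniformly over the conditioning events (with a little extra care about pairs that are counted twice, as the paper notes); summing over the $d-h$ choices of $u$ gives the stated concentration, and the expectation asymptotics then follows since $\deg_D\le n^3$. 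Finally, a minor point: you neither need nor could afford a union bound over the exponentially many choices of $(A,B)$; since the conditional failure probability is $e^{-\omega(\log n)}$ uniformly, averaging over the conditioning suffices.
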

\begin{proof}
	Here,  we only prove the bounds for $\deg_{D} (\calG_{n,d}^{h})$. 
	The proof of the  bounds for $\deg_{D} (\calG_{n,d}^{h+1})$ is entirely similar.

	Consider any $G\in S_{n,d}^{h}$. 
	There are exactly $(d-h)^2$  ways  to choose vertices $u,v$
	such that $iu, jv \in G$ and $iv, ju\notin G$. The number of ways to chose 
	$w \in [n]-\{i,j,u,v\}$ such that $wu \in G$   is $(1+O(n^{-1}))d$. To compute 
	$ \deg_{D} (G) $, we also need to exclude the case when $wv \in G$. Thus, we get that
\begin{equation}\label{eq:deg_D}
		 \deg_{D} (G) =   (1+O(n^{-1})) (d-h)^2  d -    \sum_{u,v} |\{w \in [n] \st uw,vw \in G\}|,
\end{equation}
where the sum is over $u,v$ such that $iu, jv \in G$ and $iv, ju\notin G$.

Let $\calE_{iju}$ be the event that vertices $i,j, u$  have the same neighbourhoods in $\calG_{n,d}^h$ and $G$. Consider the random variable 
$
	\sum_{v} |\{w \in [n] \st uw,vw \in \calG_{n,d}^h\}|,
$ 
 where the sum is over $v$ such that $jv \in \calG_{n,d}^h$ and $iv\notin \calG_{n,d}^h$, 
conditioned to the event $\calE_{iju}$.  Observe that it counts the number of edges between
two sets of linear size (namely, the set of neighbours of $u$ and the set of neighbours  of $j$ not adjacent to $i$) in the uniform random graph on 
vertices $[n]-\{i,j,u\}$ with almost   regular degree sequence (all degrees are $d+O(1)$).   By Lemma~\ref{l:concentration}, this random variable is tightly concentrated near its expectation. Note that some of pairs $vw$ can repeat so it does not immediately follow from Lemma~\ref{l:concentration}. However, in addition to the set of all pairs, we can apply  Lemma~\ref{l:concentration} to either the set of pairs that repeat or to the set of pairs that do not, whichever is bigger.
 Since this concentration holds for all events   $\calE_{iju}$, we find that
\begin{align*}
	\sum_{v} |\{w \in [n] \st uw,vw \in \calG_{n,d}^h\}   
	= \left(1+ o\left( \dfrac{\log n}{n}\right) \right)\E \left[\sum_{v} |\{w \in [n] \st uw,vw \in \calG_{n,d}^h\}|\right]
	\sim \lambda d(d-h)
\end{align*}
with probability at least $1 - e^{-\omega(\log n)}$. 
Summing over $u$ such that $iu \in \calG_{n,d}^h$ and $ju \notin \calG_{n,d}^h$ and using \eqref{eq:deg_D}, we derive that, with the same probability bound,
\[
	 \deg_{D} (\calG_{n,d}^h) = \left(1+ o\left( \dfrac{\log n}{n}\right) \right) \E[ \deg_{D} (\calG_{n,d}^h) ]
	 \sim (1-\lambda) (d-h)^2 d \sim \lambda^3(1-\lambda)^3 n^3.
\]
This completes the proof.
\end{proof}

Note  that if $G$ and $G'$ are adjacent in $D$ then the triple of vertices $(u,v,w)$ is uniquely determined.  We label  such edge $GG'$ of $D$ by $w$.  Let $D^w$ denote the set of edges of $D$ labelled by~$w$.

\begin{lemma}\label{l:2coupling}
			Let \eqref{ass-main} hold and $h \sim \lambda^2 n$. Then,  there is a coupling $(\calG_{n,d}^h, \calG_{n,d}^{h+1})$ such that 
		   \[
		   		\Pr(\calG_{n,d}^h \calG_{n,d}^{h+1}  \notin D) =  o\left( \dfrac{\log n}{n}\right).
		   \]
		   Also, uniformly over  $w \in [n] -\{i,j\}$,   
		   \begin{align*}
		   					\Pr(\calG_{n,d}^h \calG_{n,d}^{h+1}  \in D^w &\mid \calG_{n,d}^{h} )  = O(n^{-1}),\\
		   						\Pr(\calG_{n,d}^h \calG_{n,d}^{h+1}  \in D^w &\mid \calG_{n,d}^{h+1} ) = O(n^{-1}).
		   \end{align*}
	\end{lemma}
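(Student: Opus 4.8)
The plan is to apply Theorem~\ref{T:coupling} to the bipartite graph $D$ constructed above, with $S = S_{n,d}^h$, $T = S_{n,d}^{h+1}$, and a suitable choice of the parameters $\eps$ and $\delta$. The inputs needed are: (i) a two-sided control on $|D|/|S|$ and $|D|/|T|$, so that the ``good'' vertex sets $\GoodS,\GoodT$ are defined with a meaningful threshold; and (ii) the statement that almost all vertices of $S$ and $T$ are good. Both come from Lemma~\ref{l:condeg}: the expectations of $\deg_D(\calG_{n,d}^h)$ and $\deg_D(\calG_{n,d}^{h+1})$ are both $\sim \lambda^3(1-\lambda)^3 n^3$, which by double counting gives $|D| \sim \lambda^3(1-\lambda)^3 n^3 |S| \sim \lambda^3(1-\lambda)^3 n^3 |T|$; in particular $|S| \sim |T|$. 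Then the concentration half of Lemma~\ref{l:condeg} says that with probability $1 - e^{-\omega(\log n)}$ a uniform random $G \in S$ (resp.\ $G' \in T$) has $\deg_D(G) = (1+o(\log n/n))\,\E \deg_D(\calG_{n,d}^h)$, so we may take, say, $\eps = \Theta(\log n / n)$ and $\delta = e^{-\omega(\log n)}$, and all but an $e^{-\omega(\log n)}$ fraction of vertices on each side are good. Theorem~\ref{T:coupling} then yields a coupling with $\Pr(\calG_{n,d}^h\calG_{n,d}^{h+1}\notin D) \le 2\eps + 4\delta = o(\log n/n)$, which is the first claimed bound.

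For the second pair of bounds, I would apply the ``furthermore'' part of Theorem~\ref{T:coupling} with $H = D^w$, the set of edges of $D$ labelled by $w$. The theorem gives
\[
\max_{G\in S}\Pr\left(\calG_{n,d}^h\calG_{n,d}^{h+1}\in D^w \mid \calG_{n,d}^h = G\right) \le \varDelta_S(D^w)\left(\frac{|S|}{|D|} + \frac{2}{(1-\delta)|T|}\right),
\]
and symmetrically with the roles of $S$ and $T$ swapped for the conditioning on $\calG_{n,d}^{h+1}$. Since $|D| \sim \lambda^3(1-\lambda)^3 n^3 |S|$ and $|S|\sim|T|$, the factor in parentheses is $O(n^{-3})$ uniformly in $w$. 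So the task reduces to showing $\varDelta_S(D^w) = O(n^2)$, i.e.\ any fixed $G \in S_{n,d}^h$ is incident to $O(n^2)$ edges of $D$ carrying the label $w$. But an edge $GG' \in D^w$ is determined by the switching data: given $G$ and the designated vertex $w$, the vertices $u,v$ must be chosen from $[n]\setminus\{i,j,w\}$ with $iu\in G$, $jv\in G$, $iv,ju\notin G$, $uw\in G$, $vw\notin G$, and then $G'$ is uniquely determined. There are at most $d = O(n)$ choices of $u$ (a neighbour of $i$) and at most $d = O(n)$ choices of $v$ (a neighbour of $j$), so $\varDelta_S(D^w) \le d^2 = O(n^2)$; the same bound holds for $\varDelta_T(D^w)$ by the symmetric description of edges from the $G'$ side. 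Multiplying, $\Pr(\calG_{n,d}^h\calG_{n,d}^{h+1}\in D^w\mid\calG_{n,d}^h) = O(n^2)\cdot O(n^{-3}) = O(n^{-1})$, uniformly over $w\in[n]\setminus\{i,j\}$, and likewise for the conditioning on $\calG_{n,d}^{h+1}$.

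The bookkeeping obstacle — and the only place that requires care — is checking that the hypotheses of Theorem~\ref{T:coupling} really are met with $\delta$ this small: Lemma~\ref{l:condeg} gives concentration for $\deg_D$ of a \emph{uniform random} element of $S_{n,d}^h$ with failure probability $e^{-\omega(\log n)}$, which is exactly the statement that $|S \setminus \GoodS| \le e^{-\omega(\log n)}|S|$ once $\eps$ is chosen to absorb the $o(\log n/n)$ relative error (e.g.\ $\eps = (\log n)^2/n$). Since $e^{-\omega(\log n)} = o(\log n/n)$, the resulting $2\eps + 4\delta$ is indeed $o(\log n/n)$, and the $\delta$ appearing in the denominators $(1-\delta)|T|$ is harmless. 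The combinatorial count $\varDelta_S(D^w) \le d^2$ is immediate from the definition of adjacency in $D$, so no real difficulty arises there. I would therefore expect the proof to be short: state the parameter choice, quote Lemma~\ref{l:condeg} to verify the hypotheses, invoke Theorem~\ref{T:coupling} for the first bound, and then bound $\varDelta_S(D^w), \varDelta_T(D^w)$ by $d^2$ and plug into the ``furthermore'' estimate for the second and third bounds.
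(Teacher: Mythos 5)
Your proposal follows the paper's proof essentially step for step: Lemma~\ref{l:condeg} is used to verify the hypotheses of Theorem~\ref{T:coupling} (with $\delta=e^{-\omega(\log n)}$ coming from the concentration statement), the first bound is $2\eps+4\delta$, and the conditional bounds come from the ``furthermore'' part of Theorem~\ref{T:coupling} applied to $H=D^w$ with $\varDelta_S(D^w)\le d^2=O(n^2)$ (one choice of $u$ among neighbours of $i$, one of $v$ among neighbours of $j$, after which $G'$ is determined), the factor in parentheses being $O(n^{-3})$ because $|S|/|D|\sim\(\lambda^3(1-\lambda)^3n^3\)^{-1}$ and $|T|=\Omega(n^3)$, and finally the symmetric application with the roles of $S$ and $T$ exchanged. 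This is exactly the paper's argument (the paper gets $|T|=\Omega(n^3)$ via $|T|\ge|D|/|S|$ rather than via $|S|\sim|T|$, an immaterial difference).

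The one point where the write-up, as stated, does not deliver the claimed conclusion is your explicit choice of $\eps$. With $\eps=\Theta(\log n/n)$, and a fortiori with $\eps=(\log n)^2/n$, you get $2\eps+4\delta=\Theta(\log n/n)$, respectively $\Theta(\log^2 n/n)$, which is \emph{not} $o(\log n/n)$, so the first bound of the lemma would not follow from those parameters. The fix is already implicit in your appeal to Lemma~\ref{l:condeg}: the concentration there has \emph{relative} error $o(\log n/n)$, so you can take $\eps$ to be precisely that $o(\log n/n)$ error function (not a fixed $\Theta(\log n/n)$ or larger threshold), and then $2\eps+4\delta=o(\log n/n)+e^{-\omega(\log n)}=o(\log n/n)$, which is how the paper argues. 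With this one correction the proposal is a faithful reproduction of the paper's proof.
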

\begin{proof}
		By Lemma \ref{l:condeg}, the assumptions of  Theorem \ref{T:coupling} hold with 
		$\eps = o\left( \dfrac{\log n}{n}\right)$ and $\delta = e^{-\omega(\log n)}$. Thus, we get the first part of the lemma.
		
		For the second part, we estimate  $\varDelta_{S}(D^{w}) \leq n^2$ counting all possible pairs of $u,v$.  By Lemma~\ref{l:condeg}, we have 
		 \[ |T|\geq \dfrac{|D|}{|S|} = 	\E[\deg_{D} (\calG_{n,d}^{h})  ]\sim \lambda^3(1-\lambda)^3 n^3.\]
	 Thus, the second part of Theorem \ref{T:coupling} gives
	 \[
	 \Pr(\calG_{n,d}^h \calG_{n,d}^{h+1}  \in D^w \mid \calG_{n,d}^{h} )  = O(n^{-1}).
	 \]
	 The last bound follows by switching the roles of $S$ and $T$ in the definition of $D$.
	\end{proof}

\subsection{Proof of Theorem \ref{t:phi}}


We get the required coupling $(\calG_{n,d}, \calG_{n,d}^h)$ as follows. 
For  all $h'$  that $|h' - \lambda^2 n| \leq  \sqrt{n} \log n$, using  Lemma \ref{l:condeg}, we show the existence of a coupling
$(\calG_{n,d}^{h'}, \calG_{n,d}^{h})$ such that 
the desired event holds with probability $1-o\left(\frac{\log^2 n}{\sqrt{n}}\right)$.
Then, we can glue these couplings taking $h'$ to be a random variable distributed according to $X_{ij}(n,d)$ and generating $\calG_{n,d}^{h}$ independently of $\calG_{n,d}^{h'}$ if $|h' - \lambda^2 n| > \sqrt{n} \log n$. By Corollary~\ref{L:dist}(b), this happens with probability at most $e^{-\omega(\log n)}$.


If $|h' - \lambda^2 n| \leq  \sqrt{n} \log n$, by the assumptions, we find that
\[
	|h' - h |  = O\left(\sqrt{n} \log n \right).
\]
Therefore, we need to combine at most $O(\sqrt{n} \log n )$ couplings from Lemma \ref{l:2coupling}. The probability that there exist some $\hat{h}$ between $h$ and $h'$ such that $\calG_{n,d}^{\hat{h}} \calG_{n,d}^{\hat h+1} \notin D$ is bounded above by 
\[
		 |h'-h|  \cdot	o\left(\dfrac{\log n}{n}\right) = o\left(\dfrac{\log^2 n}{\sqrt{n}}\right).
\]

Next, observe that, according to our construction of $D$ in Section \ref{s:2coupling}, any vertex from $[n] -\{i,j\}$ can play a role of $u$ or $v$  for at most one of $|h-h'|$ switchings. Indeed, if,  for example $h'>h$, then, in order to couple $\calG_{n,d}^{\hat{h}}$ with $\calG_{n,d}^{\hat{h}+1}$ for $h\leq\hat h<h'$, we choose 
$u,v$ adjacent in $\calG_{n,d}^{\hat{h}}$ to exactly one vertex from $\{i,j\}$, while  the corresponding graph
$ \calG_{n,d}^{\hat h+1}$ has $u$ as a common neighbour of $i$ and $j$ and $v$ is adjacent to none of them.
 
The neighbourhood of a vertex is also changed if we use it as the vertex $w$  for  $\calG_{n,d}^{\hat{h}} \calG_{n,d}^{\hat h+1} \in D$. However, by the second part of Lemma \ref{l:2coupling}, the probability that it happens at least $4$ times is bounded above by  
\[
 	|h'-h|^4 \cdot O(n^{-4}) = O\left(\dfrac{\log^4 n}{n^2} \right).
 \]
 Thus, with probability at least $1-O\left(\dfrac{\log^4 n}{n}\right)$, any vertex is used as $w$
 at most three times. 
 
 Overall, we get that, with probability at least 
 \[ 
 		1 - e^{-\omega(\log n)} 
   -o\left(\dfrac{\log^2 n}{\sqrt{n}}\right) - O\left(\dfrac{\log^4 n}{n}\right) 
 		 \geq 1 -  o\left(\dfrac{\log^2 n}{\sqrt{n}}\right),
 \]
 the neighbourhoods of constructed graphs $\calG_{n,d}$ and  $\calG_{n,d}^h$ differ by at most $8=2+2\cdot 3$ elements.



\end{document}